\numberwithin{equation}{section} 
\newtheorem{theorem}{Theorem}[section]
\newtheorem{lemma}[theorem]{Lemma}
\newtheorem{proposition}[theorem]{Proposition}
\newtheorem{corollary}[theorem]{Corollary}
\theoremstyle{definition}
\newtheorem{definition}[theorem]{Definition}
\newtheorem{example}[theorem]{Example}
\newtheorem{remark}[theorem]{Remark}
\newcommand{\cD}{\mathcal{D}}
\newcommand{\cbD}{\mathbf{D}}
\newcommand{\cE}{\mathcal{E}}
\newcommand{\cS}{\mathcal{S}}
\newcommand{\cA}{\mathcal{A}}
\newcommand{\cY}{\mathcal{Y}}
\newcommand{\h}{\mathfrak{h}}
\newcommand{\bZ}{\mathbb{Z}}
\newcommand{\bC}{\mathbb{C}}
\newcommand{\Hom}{\mathrm{Hom}}
\newcommand{\Ext}{\mathrm{Ext}}
\newcommand{\gr}{\mathrm{gr}}
\newcommand{\GL}{\mathrm{GL}}
\newcommand{\codim}{\mathrm{codim}}
\newcommand{\e}{\mathbf{e}}
\newcommand{\Pic}{\mathrm{Pic}}
\newcommand{\cL}{\mathcal{L}}
\newcommand{\cO}{\mathcal{O}}
\newcommand{\reg}{\mathrm{reg}}
\newcommand{\rH}{\mathrm{H}}
\newcommand{\bm}{\mathbf{m}}
\newcommand{\bp}{\mathbf{p}}
\newcommand{\bq}{\mathbf{q}}
\newcommand{\bx}{\mathbf{x}}
\newcommand{\bL}{\mathbf{L}}
\begin{document}
\title{On elliptic Calogero-Moser systems for complex
crystallographic reflection groups}

\author[P.~Etingof]{Pavel Etingof}
\address{{\bf P.E.}: Department of Mathematics, Massachusetts Institute of Technology,
Cambridge, MA 02139, USA}
\email{etingof@math.mit.edu}

\author[G.~Felder]{Giovanni Felder}
\address{{\bf G.F.}: Department of mathematics, ETH Z\"urich, 8092 Z\"urich, Switzerland}
\email{felder@math.ethz.ch}

\author[X.~Ma]{Xiaoguang Ma}
\address{{\bf X.M.}: Department of Mathematics, Massachusetts Institute of Technology,
Cambridge, MA 02139, USA}
\email{xma@math.mit.edu}

\author[A.~Veselov]{Alexander Veselov}
\address{{\bf A.V.}: Department of Mathematical Sciences, Loughborough University, Loughborough LE11 3TU, 
UK and Department of Mathematics and Mechanics, Moscow State University, Moscow, 119899, Russia}
\email{A.P.Veselov@lboro.ac.uk}

\begin{abstract}
To every irreducible finite crystallographic reflection group
(i.e., an irreducible finite reflection group $G$ acting faithfully on an abelian variety $X$),
we attach a family of classical and quantum integrable systems on $X$ (with meromorphic 
coefficients). These families are parametrized by $G$-invariant functions of pairs $(T,s)$,
where $T$ is a hypertorus in $X$ (of codimension 1), 
and $s\in G$ is a reflection acting trivially on $T$. 
If $G$ is a real reflection group, these families reduce to the 
known generalizations of elliptic Calogero-Moser systems, but in the non-real case they 
appear to be new. We give two constructions of the integrals of these systems -
an explicit construction as limits of classical Calogero-Moser Hamiltonians 
of elliptic Dunkl operators as the dynamical parameter goes to $0$ (implementing 
an idea of \cite{BFV}), and a geometric construction as global sections of sheaves of elliptic Cherednik 
algebras for the critical value of the twisting parameter. We also prove algebraic integrability of these 
systems for values of parameters satisfying certain integrality conditions.     
\end{abstract}

\maketitle

\vskip .1in
\centerline{\bf To Corrado De Concini on his 60-th birthday with admiration}
\vskip .1in

\section{Introduction}

Classical and quantum integrable many-particle systems on the line 
have been a hot topic since 1970s. Among these, especial attention has
been paid to Calogero-Moser systems with rational, trigonometric, and
elliptic potential. A review of the history of these systems and a variety
of important applications with references can be found in \cite{CMS}.
In particular, in \cite{OP1,OP2}, Calogero-Moser systems 
were generalized to 
the case of any root system, so that the many-particle systems 
of \cite{C1,C2,S,M} correspond to type $A$. 

There are a number of ways to construct Calogero-Moser systems and to 
prove their integrability. One is the Lax matrix method 
(\cite{M, C1, CMR,K1, OP1, OP2, BCS}).
Another, related method is Hamiltonian reduction 
(\cite{KKS},  \cite{F}, \cite{E1} (see in particular the remark at the 
end of section III), \cite{GN}). 
The third method is based on computing radial parts of Laplace 
operators on symmetric spaces (\cite{BPF},\cite{OP4}); 
this method produces quantum Calogero-Moser systems 
only for some special values of coupling constants. 
The fourth method is based on the analytic study of
hypergeometric functions associated to root systems and is
due to Heckman and Opdam (\cite{HO, He3, O1,O2}); it yielded 
the first proof of integrability of rational 
and trigonometric Calogero-Moser systems for any
root system. Finally, the fifth method, most
relevant to this paper, is due to G. Heckman \cite{He1, He2}, 
and is based on considering invariant polynomials of
Dunkl operators \cite{D}. Namely, 
Heckman managed to use this method to give a simple algebraic
proof of the integrability of Calogero-Moser systems 
for any root system and any values of coupling constants
in the rational and trigonometric cases; his method was further
improved by Cherednik \cite{Ch1}. Later Cherednik \cite{Ch2} settled the 
elliptic case, by introducing Dunkl operators for affine root systems. 

An alternative approach to proving the integrability of the quantum elliptic Calogero-Moser system 
was proposed in the paper \cite{BFV}, which introduces the elliptic counterparts of Dunkl operators. 
However, this approach did not quite succeed, because of the following difficulty: 
elliptic Dunkl operators depend on a ``dynamical'' parameter $\lambda$ (lying in the reflection representation 
of the Weyl group $W$), and are not $W$-invariant, but rather $W$-equivariant (i.e., $\lambda$ 
is also transformed by $W$); so to get $W$-invariant Hamiltonians from invariant polynomials of elliptic Dunkl operators, 
one would have to set $\lambda$ to $0$, which is impossible since the elliptic Dunkl operators have poles on the root hyperplanes.  
It was suggested in \cite{BFV} that these poles should be cancelled by some kind of a subtraction procedure
(namely, the calculation in the $A_2$ case made in \cite{BFV}, p. 909, 
indicated that the classical integrals in the dynamical variables may 
be used here), but it was unclear what exactly this procedure should be. 

Since the paper \cite{BFV}, it has seemed certain 
to the authors that elliptic Dunkl operators are ``the right'' objects.
For example, in \cite{EM1}, they were generalized to the case of finite crystallographic complex reflection groups
(following the generalization of usual Dunkl operators to finite complex reflection groups in \cite{DO}), 
and linked to double affine Hecke algebras and Cherednik algebras on complex tori. 
Yet, the problem of providing a precise connection between elliptic Dunkl operators and elliptic Calogero-Moser systems 
remained open. 

The goal of this paper is to finally solve this problem, and   
use the approach of \cite{BFV} to give a new proof of the integrability of 
quantum elliptic Calogero-Moser systems. In fact, 
our main result is more general: we use the  
elliptic Dunkl operators of \cite{EM1} to attach a family of classical and quantum integrable
systems to every finite irreducible crystallographic complex reflection group
$G$, i.e. a finite irreducible complex reflection group acting faithfully on a 
complex torus (preserving $0$)
\footnote{Such groups were classified by Popov [Po] 
(see also \cite{Ma}).}.
When $G$ is a real reflection group (i.e., a Weyl group), 
our construction reproduces the elliptic Calogero-Moser system 
attached to $G$ (in fact, in the $BC_n$ case it reproduces the
full 5-parameter Inozemtsev system \cite{I}). On the other hand, 
when $G$ is not real, we obtain new examples of 
integrable systems with elliptic coefficients. 
We will call these systems {\it crystallographic elliptic
Calogero-Moser systems.}\footnote{We note that these new integrable systems 
may not have a direct physical meaning, since their Hamiltonians are polynomials in momenta 
of degree higher than $2$ and in general have complex coefficients.} 
The simplest example is given by (\ref{cubiccase}) below.

The main idea of our construction is to consider the {\it classical} 
Calogero-Moser Hamiltonians (in the rational case constructed by Heckman's method \cite{He2} as $G$-invariant 
polynomials of the usual Dunkl operators \cite{D}), and substitute the elliptic Dunkl operators
for momentum variables, and the dynamical parameters $\lambda$ for the position variables. 
Our main result is that the resulting operators are regular in $\lambda$
near $\lambda=0$ (i.e., this construction provides the cancelation of poles asked for in \cite{BFV}).
Thus we can now set $\lambda=0$ and obtain a collection of $G$-invariant commuting operators. 
If we restrict these operators to the space of $G$-invariant functions, they become differential operators, 
and thus yield the desired integrable system. 

We also give a geometric construction of crystallographic
elliptic Calogero-Moser systems,  
as global sections of sheaves of elliptic Cherednik 
algebras for the critical value of the twisting parameter.
This is a construction in the style of the Beilinson-Drinfeld
construction of the quantum Hitchin system, \cite{BD}, as global sections of the 
sheaf of twisted differential operators on the moduli stack 
of principal bundles over a curve, for critical twisting.

Finally, we establish algebraic integrability of the quantum crystallographic elliptic Calogero-Moser systems for parameters satisfying certain integrality conditions. 

The paper is organized as follows. 
In Section 2, we recall the basics on complex reflection groups,  
rational Dunkl operators, rational Calogero-Moser Hamiltonians, and 
elliptic Dunkl operators. In Section 3, we state the main theorem
and give some examples. In Section 4, we describe the main new example, 
attached to the groups $S_n\ltimes (\Bbb Z/m\Bbb Z)^n$, where $m=3,4$ or $6$. 
In Section 5, we give two different proofs of the main theorem,
and explain the relation between our arguments and the ones of \cite{Ch2}.
In Section 6, we give the geometric construction of the crystallographic elliptic
Calogero-Moser systems. In Section 7, we establish their algebraic quantum integrability
of our quantum integrable systems at integer points.

{\bf Acknowledgements.} 
The work of P.E. and X.M. was  partially supported by the NSF grants
DMS-0504847 and DMS-0854764. The work of G.F. was partially supported by SNF grant 200020-122126. 
The authors are grateful to O. Chalykh and E. Rains for useful discussions.

\section{Preliminaries}

\subsection{Complex tori and line bundles on them}\label{tori}

Let $\h$ be a finite dimensional complex vector space, and 
$\h^\vee$ be the Hermitian dual of $\h$ (i.e. the dual $\h^*$ with the conjugate complex structure).
Suppose that $\Gamma\subset \h$ is a cocompact lattice. Then $X=\h/\Gamma$ is a complex torus. 

Let $X^{\vee}$ be the dual torus to $X$, i.e.
$X^\vee=\h^\vee/\Gamma^\vee,$
where $\Gamma^{\vee}$ is the dual lattice to $\Gamma$
under the form ${\rm Im}\lambda(v)$, $v\in \h$, 
$\lambda\in \h^\vee$.

It is well known that $X^\vee$ is naturally identified with $\Pic_{0}(X)$, 
the set of classes of topologically trivial holomorphic line bundles on $X$.  
For any $\lambda\in \h^{\vee}$, 
let $\cL_\lambda\in \Pic_{0}(X)$ denote the
corresponding holomorphic line bundle; it is obtained by 
taking the quotient of the trivial line bundle on $\h$ by the $\Gamma$-action 
given by the formula
$$
\gamma(\bx, z)=(\bx +\gamma,e^{2\pi {\rm i}{\rm Im}\lambda(\gamma)}z),
\gamma\in \Gamma.
$$ 
Note that the line bundle $\cL_\lambda$ comes with a natural
Hermitian structure and flat unitary connection (coming from the constant 
ones on $\h$). We will denote this connection by $\nabla$. 

If $X$ is 1-dimensional (an elliptic curve), then we have a
natural identification $X\cong X^\vee$, sending $\bx\in X$ to the
bundle $\cO(\bx)\otimes \cO(0)^*$. This identification yields a natural 
positive Hermitian form $\langle~,~\rangle$ on the line $T_0X^\vee$.  
Hence, for every hypertorus $T\subset X$ passing through $0$ (of
codimension $1$), there is a natural positive Hermitian form 
$\langle~,~\rangle$ on the line $T_0(X/T)^\vee=T_0((X/T)^\vee)$. 

\subsection{Complex reflection groups} 

Let $\h$ be a finite dimensional complex vector space. 
A semisimple element $s\in \GL(\h)$ is called a complex reflection if 
${\rm Im}(1-s)$ is 1-dimensional. For a complex reflection $s$, 
let $\zeta_s=\det(s|_{\h^*})$,  
and let $\alpha_s$ be a nonzero 
linear function on $\h$ vanishing on the fixed hyperplane of $s$.

Let $\cS$ be the set of complex reflections in $G$.
For any $s\in \cS$, we have a decomposition:
\begin{equation*}\label{decomp}
\h=\h^{s}\oplus  {\h}_{s},
\end{equation*}
where $\h^{s}$ is the codimension 1 subspace of $\h$ with
the trivial action of $s$, and 
${\h}_{s}=((\h^{*})^{s})^{\perp}$, 
which is an $s$-invariant $1$-dimensional space. We also have a similar
decomposition on the dual space: 
$\h^{*}=(\h^{*})^{s}\oplus  {\h}_{s}^{*}$.

A finite subgroup 
$G\subset GL(\h)$ is called a {\it complex reflection group}
if it is generated by complex reflections. The representation $\h$
is then called the reflection representation of $G$. 
A complex reflection group $G$ is called irreducible if $\h$ is an irreducible representation 
of $G$. 

Let $G$ be a complex reflection group with reflection
representation $\h$. For a hyperplane $H\subset \h$,
denote by $G_H\subset G$ the stabilizer of a generic
point in $H$. We call $H$ a {\it reflection hyperplane} if $G_H$ is nontrivial;
in this case, $G_H$ is a cyclic group. Let $\h_{\reg}$ be the complement 
of the reflection hyperplanes in $\h$.

By the Shephard-Todd-Chevalley theorem (see \cite{Che}), 
if $G$ is a complex reflection group, then $(S\h)^G$
is a polynomial algebra. Let 
$P_i, i=1, \ldots, n$, denote homogeneous generators of $(S\h)^G$. 

\subsection{Complex tori with an action of a complex reflection group}{\label{sec:cplxact}}

Let $G$ be an irreducible complex reflection group with
reflection representation $\h$, and let $\Gamma\subset \h$ be a
cocompact lattice preserved by $G$, i.e., $G$ is a crystallographic complex reflection group. 
Then we get a $G$-action on the complex torus $X=\h/\Gamma$ preserving $0$.

The action of $G$ on $X$ induces a $G$-action on the dual torus
$X^{\vee}\cong \Pic_{0}(X)$. For a line bundle $\cL\in \Pic_0(X)$,
denote the image of $\cL$ under $g$ by $\cL^g$. 

For any complex reflection $s\in G$,
let $X^s$ be the set of $x\in X$
s.t. $sx=x$.  Connected components
of $X^s$ (which all have codimension $1$)
are called {\it reflection hypertori}. 
Among them, there is one passing through $0$, 
which we denote by $T_s$. 
Let $m_s$ be the order of $s$ (note that $m_s=2,3,4$ or $6$), and let $j(s)\in \lbrace{1,\ldots,m_s-1\rbrace}$ 
be such that $\zeta_s=e^{-2\pi {\rm i}j(s)/m_s}$. 

Let $X_{\reg}$ be the
complement of the reflection hypertori in $X$.
For a reflection hypertorus $T\subset X$, denote by $G_T\subset G$
the stabilizer of a generic point in $T$.
It is a cyclic group. Denote its order by $m_T$ (so $m_{T_s}=m_s$), 
and let $s_T$ be the generator of $G_T$ 
which acts on the normal bundle of $T$ by 
multiplication by $e^{2\pi {\rm i}/m_T}$. 

Denote by $\cA$ the set 
$$
\{(T, j)| T \text{ is a reflection hypertorus }, j=1, \ldots, m_{T}-1\}.
$$ 

For any reflection hypertorus $T$, 
$\h_{s_T^j}$ is independent on $j$, so we will denote it by $\h_T$. 

\begin{remark}
Note that the complex torus 
$X$ in our situation is always 
an abelian variety, which is isogenous to 
a product of elliptic curves. Indeed, 
let $s_1,\ldots,s_n\in G$ be a collection of reflections such that
$\lbrace{\alpha_{s_1},\ldots,\alpha_{s_n}\rbrace}$ is a basis of 
$\h^*$. Then the natural map 
$$
X\to X/T_{s_1}\times\cdots\times
X/T_{s_n}
$$ 
is an isogeny. 
\end{remark} 

\subsection{Dunkl operators for complex reflection groups}{\label{sec:dunkl}}

Let us recall the basic theory of Dunkl operators for complex reflection groups 
(see \cite{DO}, \cite{EM2}). 

Let $c: \cS\to \bC$ be a $G$-invariant function.
The (rational) {\it Dunkl operators} for $G$ are 
the following family of pairwise commuting linear operators
acting on the space of rational functions on $\h$:
\begin{equation}{\label{eqn:clado}}
\cbD_{v,c}=\partial_{v}+\sum_{s\in \cS}
\frac{2c(s)\alpha_{s}(v)}{(1-\zeta_s)\alpha_{s}}s, 
\end{equation}
where $v\in \h$, and $\partial_{v}$ is the derivation associated to the vector
$v$.\footnote{This definition of Dunkl operators is slightly different 
from the one in \cite{E3}, \cite{EM2}, namely we have replaced $s-1$ by $s$. 
This has no significant effect on the considerations below, since our Dunkl 
operators are conjugate to the ones in \cite{E3}, \cite{EM2}.}
Thus, the Dunkl operators are elements in $\bC G\ltimes D(\h_{\reg})$, where 
$D(\h_{\reg})$ denotes the algebra of differential operators on $\h_{\reg}$.

Similarly, one defines the quasiclassical limits of Dunkl operators, 
called the {\it classical Dunkl operators}, which are
elements of $\bC G\ltimes \cO(T^*\h_{\reg})$. 
Namely, for $v\in \h$, let $p_v$ be the corresponding momentum coordinate
in $\cO(T^*\h_{\reg})$. Then the classical Dunkl
operators are defined by the formula
\begin{equation*}
\cbD_{v,c}^0=p_{v}+\sum_{s\in \cS}
\frac{2c(s)\alpha_{s}(v)}{(1-\zeta_s)\alpha_{s}}s, 
\end{equation*}
which is obtained by replacing the derivative $\partial_{v}$ by its symbol $p_{v}$
in \eqref{eqn:clado}.

\subsection{Calogero-Moser Hamiltonians}\label{ccmh}

Let ${\bf m}: \bC G\ltimes D(\h_{\reg})\to D(\h_{\reg})$
be the map defined by the 
formula ${\bf m}(Lg)=L$, where $L\in D(\h_{\reg})$.  
Define the $G$-invariant differential operators $\widehat P_i^c$ on $\h_{\reg}$ by the formula 
$$
\widehat P_i^c:={\bf m}(P_i(\cbD_{\bullet,c})).
$$
In other words, $\cbD_{\bullet,c}$ is a linear map $\h\to \bC  G\ltimes D(\h_{\reg})$ whose image is commuting, 
so it defines an algebra homomorphism $S\h\to \bC  G\ltimes
D(\h_{\reg})$, and ${P}_i(\cbD_{\bullet,c})$ 
is the image of $P_i$ under this homomorphism. 
Note that $\widehat P_i^0=P_i(\partial)$. It is known (see \cite{He2}, \cite{EM2}, \cite{BC}) that these operators are pairwise 
commuting (i.e., form a quantum integrable system).
They are called the {\it rational Calogero-Moser operators}.

Similarly, one can define the quasiclassical limits of 
$\widehat P_i^c$. Namely, let ${\bf m}: \bC G\ltimes \cO(T^*\h_{\reg})\to \cO(T^*\h_{\reg})$
be the map defined by the formula ${\bf m}(Pg)=P$, where $P\in \cO(T^*\h_{\reg})$.  
Define the $G$-invariant functions $P_i^c\in \cO(T^*\h_{\reg})$ by the formula 
$$
P_i^c(\bp,\bq):={\bf m}(P_i(\cbD_{\bullet,c}^0)).
$$
(Here $\bq\in \h$ is the position variable, and $\bp\in \h^*$ is the momentum variable).
Note that $P_i^0=P_i(\bp)$.
It is known (see \cite{He2}, \cite{EM2}) that these functions are pairwise 
Poisson commuting (i.e., form a classical integrable system). 
They are called the {\it rational classical Calogero-Moser Hamiltonians}.

The following important lemma will be used below. 

\begin{lemma}{\label{lem:CCM}}
$P_i(\cbD_{\bullet,c})$ is a function on $T^*\h_{\reg}$,  
i.e., it does not involve nontrivial elements of $G$.
Thus, $P_i^c=P_i(\cbD_{\bullet,c}^0)$, i.e. the application of ${\bf m}$ 
is not necessary. 
\end{lemma}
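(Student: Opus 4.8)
The plan is to prove this by a symmetry/averaging argument combined with degree considerations. First I would recall that the full expression $P_i(\cbD_{\bullet,c})\in \bC G\ltimes D(\h_{\reg})$ can be written uniquely as $\sum_{g\in G} L_g\, g$ with $L_g\in D(\h_{\reg})$, and the claim is that $L_g=0$ for $g\neq 1$. The key structural input is that the Dunkl operators $\cbD_{v,c}$ are $G$-equivariant: for $h\in G$ one has $h\,\cbD_{v,c}\,h^{-1}=\cbD_{hv,c}$. Since $P_i$ is a $G$-invariant polynomial, it follows that $P_i(\cbD_{\bullet,c})$ is $G$-invariant under conjugation, i.e. $h\,P_i(\cbD_{\bullet,c})\,h^{-1}=P_i(\cbD_{\bullet,c})$. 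In terms of the decomposition $\sum_g L_g g$, this says $h\,L_g\,h^{-1}=L_{hgh^{-1}}$ for all $h$, so the component functions are constant along conjugacy classes (up to the natural $G$-action on $D(\h_{\reg})$); this alone does not kill the non-identity components, so a second ingredient is needed.

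The second ingredient is a filtration/degree argument on the group-algebra part. The natural grading to use assigns to $p_v$ (or $\partial_v$) degree $1$, to $1/\alpha_s$ degree $-1$, and to elements of $G$ degree $0$; then $\cbD_{v,c}$ is homogeneous of degree $1$, and $P_i(\cbD_{\bullet,c})$ is homogeneous of degree $d_i=\deg P_i$. Separately one should track the ``order of the pole'' along each reflection hyperplane. The crucial observation is that the only source of non-identity group elements in $P_i(\cbD_{\bullet,c})$ is the summands $\tfrac{2c(s)\alpha_s(v)}{(1-\zeta_s)\alpha_s}s$, and each such summand carries a simple pole along $H_s$ together with a factor $s$ acting on functions. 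When one expands $P_i(\cbD_{\bullet,c})$ and collects a monomial that produces a fixed non-identity group element $g$, one can compare, on one hand, the minimal pole order along the relevant hyperplanes forced by the surviving $1/\alpha_s$ factors, and on the other hand, the maximal pole order that a degree-$d_i$, $G$-invariant operator can have — and the point is that the reflections $s$ appearing must ``cancel out'' the pole they create if the net group element is to be $g\neq 1$, which forces extra vanishing.

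Concretely, the cleanest route is the one used for ordinary Dunkl operators (as in \cite{He2}, \cite{EM2}): apply $P_i(\cbD_{\bullet,c})$ to an arbitrary polynomial $f\in S\h^*$ and argue that the result, a priori a sum $\sum_g (L_g f)\,g$ of functions, is actually a single polynomial — equivalently, that $P_i(\cbD_{\bullet,c})f$ is symmetric under no nontrivial $g$. One uses that each $\cbD_{v,c}$ preserves $S\h^*$ (the residue along each hyperplane is divisible by $\alpha_s$, so the pole cancels), hence maps polynomials to polynomials; applying this $d_i$ times, $P_i(\cbD_{\bullet,c})f$ is a polynomial. But a polynomial, viewed in $\bC G\ltimes D(\h_{\reg})$ via its action, forces all $L_g$ with $g\neq 1$ to annihilate every polynomial, hence (since polynomials are dense / since $D(\h_{\reg})$ acts faithfully on rational functions and a nonzero $L_g$ would have to kill all of $S\h^*$) to vanish identically. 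The quasiclassical statement $P_i^c=P_i(\cbD^0_{\bullet,c})$ then follows by taking the associated graded / leading symbol, or simply by repeating the same argument with $\partial_v$ replaced by $p_v$, since the combinatorics of which group elements appear is identical.

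The main obstacle I anticipate is making precise the step ``$\cbD_{v,c}$ maps polynomials to polynomials'' in the complex reflection group setting — i.e. checking that for $f\in S\h^*$ the quantity $\sum_{s}\tfrac{2c(s)\alpha_s(v)}{(1-\zeta_s)\alpha_s}\,s(f)$ is again polynomial. This is where one must use the $G$-invariance of $c$ and group the reflections with a common hyperplane $H$ into the cyclic group $G_H$: for each reflection hyperplane, $\alpha_s$ is (up to scalar) independent of $s\in G_H\setminus\{1\}$, and one checks that $\sum_{s\in G_H\setminus\{1\}}\tfrac{2c(s)}{1-\zeta_s}\,s(f)$ is divisible by $\alpha_H$ because the bracketed combination vanishes on $H$ (its restriction to $H$ is a sum over a cyclic group of a character times $f|_H$, which telescopes to zero). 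Once this divisibility is in hand the rest is formal, so I would spend most of the write-up on that lemma and treat the degree/faithfulness bookkeeping as routine.
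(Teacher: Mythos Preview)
Your proposal has a genuine gap, and in fact the approach cannot work as stated because it would prove too much. The paper itself remarks, immediately after the lemma, that ``this lemma does not hold in the quantum setting.'' But your main argument --- apply $P_i(\cbD_{\bullet,c})$ to a polynomial $f$, observe the result is a polynomial, and conclude that the components $L_g$ with $g\neq 1$ vanish --- is carried out entirely for the \emph{quantum} Dunkl operators $\cbD_{v,c}\in \bC G\ltimes D(\h_{\reg})$, and would apply verbatim there. So the argument must be wrong. Concretely, from $\sum_g L_g\,g$ acting on $f$ one gets the single function $\sum_g L_g(g\!\cdot\! f)$; the fact that this is a polynomial says only that the operator preserves polynomials, not that any individual $L_g$ vanishes. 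Many nonzero elements of $\bC G\ltimes D(\h_{\reg})$ with nontrivial group part preserve polynomials (e.g.\ $\frac{1}{\alpha_s}(s-1)$).

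A secondary problem is that with the paper's normalization (reflection part $\frac{2c(s)\alpha_s(v)}{(1-\zeta_s)\alpha_s}\,s$ rather than $\frac{2c(s)\alpha_s(v)}{(1-\zeta_s)\alpha_s}(s-1)$), your divisibility check fails: on $H$ one has $s(f)|_H=f|_H$, so $\sum_{s\in G_H\setminus\{1\}}\frac{2c(s)}{1-\zeta_s}\,s(f)$ restricts to $\bigl(\sum_{s}\frac{2c(s)}{1-\zeta_s}\bigr)f|_H$, which has no reason to vanish. The footnote in the paper flags exactly this point. So even the ``preserves polynomials'' step is not available here without modification.

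The paper's proof instead exploits a feature specific to the \emph{classical} ($t=0$) Cherednik algebra: $(S\h^*)^G$ is central in $H_{0,c}(G,\h)$, and by the duality isomorphism $H_{0,c}(G,\h^*)\cong H_{0,c}(G,\h)$ the element $P_i(\cbD^0_{\bullet,c})$ is likewise central, hence central in $\bC G\ltimes \cO(T^*\h_{\reg})$. Then one uses that $\cO(T^*\h_{\reg})$ is commutative (an integral domain): if $\sum_g L_g\,g$ commutes with every function $f$, then $L_g\,(g\!\cdot\! f-f)=0$ for all $f$, forcing $L_g=0$ for $g\neq 1$. This step has no analogue for $t=1$, which is why the quantum statement fails and why any argument that does not distinguish the two cases cannot succeed.
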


Note that this lemma does not hold in the quantum setting. 

\begin{proof}
Consider the classical rational Cherednik algebra for $G$,
$H_{0,c}(G,\h)$, generated inside $\bC  G\ltimes \cO(T^*\h_{\reg})$ 
by $G$, $S\h^*$ (the algebra of polynomials on $\h$), and the classical Dunkl operators
(see \cite{E3}, Section 7, and \cite{EM2}, Section 3). 

It is easy to see that $(S\h^*)^G$ 
is contained in the center of $H_{0,c}(G,\h)$.
On the other hand, there is an isomorphism 
$H_{0,c}(G,\h^*)\to H_{0,c}(G,\h)$
which maps linear functions on $\h^*$ to classical Dunkl operators on $\h$
(see \cite{EM2}, proof of Prop. 3.16). 
Thus, for any $P\in (S\h)^G$, $P(\cbD_{\bullet,c}^0)$ is also in the center 
of $H_{0,c}(G,\h)$, and thus, in the center of $\bC  G\ltimes \cO(T^*\h_{\reg})$. 
So $P(\cbD_{\bullet,c}^0)$ commutes with functions of $\bp$ and $\bq$, 
and hence is itself a function.
\end{proof}

\subsection{Elliptic Dunkl operators}{\label{sec:EDO}}

Let $G,X$ be as above. 
Fix a generic line bundle $\cL\in \Pic_0(X)$ (i.e., such that 
$\cL^{g}\ne \cL$ for any reflection $g$). From \cite{EM1}, we know that 
for any $(T, j)\in \cA$, there is a  unique global meromorphic 
section $f_{T,j}^{\cL}$ of the bundle
$(\cL^{s_{T}^{j}})^{*}\otimes \cL\otimes \h_{T}^{*}$
which has a simple pole along $T$ with residue $1$ and no other
singularities.

Let $C$ be a $G$-invariant function on $\cA$.   
Recall from \cite{EM1} that the {\it elliptic Dunkl operator} corresponding to 
$\cL,\nabla,C$, and a vector $v\in \h$ 
is the following operator acting on the local meromorphic 
sections of $\cL$:
\begin{equation*}
\cD_{v,C}^{\cL}=\nabla_v+\sum_{(T, j)\in \cA}C(T, j) 
(f_{T, j}^{\cL},v) s_{T}^{j}.
\end{equation*}
(Here we regard $\h_T^*$ as a subspace of $\h^*$ in a natural
way, using that $\h_T$ has a distinguished complement in $\h$). 

\begin{example}
In the Weyl group case the elliptic Dunkl operators are the operators from \cite{BFV}:
\begin{equation*}
\cD_{v,C}^{\lambda}=\nabla_v+\sum_{\alpha\in R_+}C_{\alpha} (\alpha,v) \sigma_{(\alpha^{\vee},\lambda)}(\alpha) s_{\alpha},
\end{equation*}
where
$$
\sigma_\mu(z)=\frac{\theta(z-z_0-\mu)\theta'(0)}{\theta(z-z_0)\theta(-\mu)},
$$
and 
$$\theta(z)=\theta_1(z,\tau)=-\sum_{n=-\infty}^{\infty} e^{2\pi {\rm i} (z+1/2)(n+1/2)+\pi {\rm i} \tau(n+1/2)^2}$$
 is the first Jacobi theta-function \cite{WW}. 
\end{example}

\begin{remark}
This differs from the definition of  \cite{EM1}
by the sign of $C$. We choose this sign convention 
to reconcile the notation with texts on rational Cherednik
algebras, e.g. \cite{E3} and \cite{EM2}.  
\end{remark}

\begin{proposition}\label{prope} (\cite{BFV, EM1})
The elliptic Dunkl operators have the following properties.
\begin{enumerate}
\item[]
\item commutativity:
$[\cD_{v,C}^{\cL}, \cD_{v',C}^{\cL}]=0$, 
for any $v,v'\in \h$.
\item equivariance: $g\circ \cD_{v,C}^{\cL}\circ g^{-1}=
\cD_{gv,C}^{\cL^g}$, where $g\in G$.
\end{enumerate}
\end{proposition}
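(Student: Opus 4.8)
The plan is to verify both properties by direct computation using the defining formula for $\cD_{v,C}^{\cL}$ and the basic properties of the meromorphic sections $f_{T,j}^{\cL}$ established in \cite{EM1}. For the equivariance statement, I would first record how $G$ acts on the data. The flat connection $\nabla$ on $\cL$ transports under $g$ to the flat connection on $\cL^g$, and conjugating $\nabla_v$ by $g$ produces $\nabla_{gv}$ on $\cL^g$. Next, the action of $g$ permutes the pairs $(T,j)\in\cA$ via $(T,j)\mapsto (gT,j)$, with $g s_T^j g^{-1}=s_{gT}^j$; since $C$ is $G$-invariant this leaves the sum invariant as an indexing set. The remaining point is the transformation of the section: $g$ carries the meromorphic section $f_{T,j}^{\cL}$ of $(\cL^{s_T^j})^*\otimes\cL\otimes\h_T^*$, which has a simple pole along $T$ with residue $1$, to a section of $(\cL^{g})^{*}\otimes(\cL^g)^{s_{gT}^j}\otimes\h_{gT}^*$ with a simple pole along $gT$ with residue $1$; by the uniqueness clause in the cited result of \cite{EM1} this must equal $f_{gT,j}^{\cL^g}$, up to the identification of $\h_T$ with $\h_{gT}$ via $g$. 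Pairing with $v$ and then conjugating the whole operator, term by term, assembles to $\cD_{gv,C}^{\cL^g}$.

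For commutativity, I would expand $[\cD_{v,C}^{\cL},\cD_{v',C}^{\cL}]$ into three types of terms: the $\nabla$--$\nabla$ bracket, the mixed $\nabla$--$s$ brackets, and the $s$--$s$ brackets. The first vanishes because $\nabla$ is flat (the curvature of $\cL$ with its flat unitary connection is zero). The mixed terms combine the derivative of $(f_{T,j}^{\cL},v')$ in the direction $v$ against the corresponding term with $v,v'$ swapped, together with the reflection $s_T^j$ acting on the section coefficient; antisymmetrizing in $v,v'$, these contributions cancel because $(f_{T,j}^{\cL},\cdot)$ is a closed one-form valued object (it is $\nabla$-covariantly constant modulo the pole) and because $s_T^j$ acts trivially on $\h^s=\h_T^{\perp}$, so $\alpha_{s_T}(v)$-type factors produce the needed symmetry. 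The genuinely delicate part is the $s$--$s$ bracket: one gets a sum over pairs $(T,j)$ and $(T',j')$ of products of two meromorphic section-coefficients times $s_T^j s_{T'}^{j'}$, and one must show that after antisymmetrization in $v,v'$ everything cancels.

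The main obstacle, as usual for Dunkl-type operators, is precisely this last cancellation in the $s$--$s$ bracket. When $T=T'$ the group elements $s_T^j$ all lie in the cyclic group $G_T$ and commute, and the cancellation for fixed $T$ is an identity among the functions $f_{T,j}^{\cL}$ restricted near $T$ — essentially a functional equation for the relevant theta-quotients, reducing in the rank-one case to a classical three-term theta identity. When $T\ne T'$ one must pair off the contribution of $(T,j,T',j')$ with that of its image under $s_T^j$ or $s_{T'}^{j'}$ (which sends $T'$ to another reflection hypertorus through the intersection locus), using genericity of $\cL$ to guarantee the relevant poles are simple and that no unwanted resonances occur; this is where I would lean most heavily on the structural results of \cite{EM1}, and indeed the cleanest route is simply to cite \cite{BFV} and \cite{EM1} for the rank-one and general computations respectively, since the proposition is attributed to those papers.
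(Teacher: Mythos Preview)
The paper does not prove this proposition at all: it is stated with the attribution ``\cite{BFV, EM1}'' and no argument is given. Your final sentence---that the cleanest route is simply to cite \cite{BFV} and \cite{EM1}---is therefore exactly what the paper does, and on that level your proposal matches.

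Your sketch goes further than the paper by outlining what the cited proofs contain, and that outline is essentially sound. Two small comments. In the equivariance paragraph the transported bundle should read $((\cL^g)^{s_{gT}^j})^*\otimes\cL^g\otimes\h_{gT}^*$ rather than what you wrote; this is only a typo and the uniqueness argument you describe is the correct one. In the commutativity paragraph, your claim that the mixed $\nabla$--$s$ terms cancel among themselves is in fact correct here (because each $f_{T,j}^{\cL}$ factors as $\varphi_{T,j}\alpha_T$ with $\varphi_{T,j}$ pulled back from the one-dimensional quotient $X/T$, so both the first-order and zeroth-order contributions are symmetric in $v,v'$), though the reason you give---``$(f_{T,j}^{\cL},\cdot)$ is a closed one-form valued object''---is a bit imprecise; the actual mechanism is the rank-one factorization just mentioned. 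You are right that the substance lies entirely in the $s$--$s$ bracket, where one needs the theta-function identities established in \cite{BFV} (Weyl case) and \cite{EM1} (general crystallographic complex reflection groups), and deferring to those references is appropriate.
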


It is also useful to consider {\it classical elliptic Dunkl operators}, which are quasiclassical limits of elliptic
Dunkl operators. These operators are parametrized by $v,C,\cL$, and are given by the formula
\begin{equation*}
\cD_{v,C}^{0,\cL}=p_v+\sum_{(T, j)\in \cA}C(T, j)
(f_{T, j}^{\cL},v) s_{T}^{j}.
\end{equation*}
The properties of these operators are similar to those of the quantum elliptic 
Dunkl operators. 

\subsection{Behavior of elliptic Dunkl operators near $0\in \Pic_0(X)$.}

Let $\alpha_T:=\alpha_{s_T}\in \h_T^*$.
Then we have 
$$
f_{T,j}^{\cL_\lambda}=\varphi_{T,j}(\lambda)\alpha_T, 
$$
where $\varphi_{T,j}(\lambda)$ is a section of 
$(\cL_\lambda^{s_{T}^{j}})^{*}\otimes \cL_\lambda$. 
We are going to study the behavior of this section near $\lambda=0$. 

Fix a $G$-invariant positive definite Hermitian form\footnote{We agree 
that Hermitian forms are linear in the first argument 
and antilinear in the second one.} 
$B(~,~)$ on $\h^\vee$ (which is unique up to a positive factor), 
and use it to identify $\h$ with $\h^\vee$; 
so the element of $\h$ corresponding to $\lambda\in \h^\vee$ 
will be denoted by $B(\lambda)$.

For a reflection $s\in G$, set 
$$
a_B(s)=\frac{B(u,u)}{\langle u,u\rangle}
$$
for $0\ne u\in T_0(X/T_s)^\vee$
(where $\langle~,~\rangle$ is defined in Subsection \ref{tori}). 

\begin{proposition}\label{asy}
The section $\widetilde\varphi_{T,j}(\lambda):=
B(\lambda,\alpha_T)\varphi_{T,j}(\lambda)$ is regular in $\lambda$ near $\lambda=0$, 
and if $B(\lambda,\alpha_T)=0$ (i.e., $s_T\lambda=\lambda$), we have  
$$
\widetilde\varphi_{T,j}(\lambda)=-\frac{a_B(s_T)}{1-e^{2\pi {\rm i}j/m_T}}.
$$
\end{proposition}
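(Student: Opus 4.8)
The plan is to reduce everything to a one-dimensional computation on an elliptic curve. Since the statement is local near the reflection hypertorus $T$ and only involves the normal direction, I would first observe that $\varphi_{T,j}(\lambda)$, as a section of $(\cL_\lambda^{s_T^j})^*\otimes\cL_\lambda$ with a simple pole along $T$ of residue $1$, depends on $\lambda$ only through the component of $\lambda$ in the line $T_0((X/T)^\vee)\subset\h^\vee$; the components orthogonal to $\h_T^*$ act by a nonvanishing holomorphic factor that equals $1$ at the pole and contributes nothing to the residue computation or to the value at $\lambda$ with $s_T\lambda=\lambda$. Concretely, using the isogeny $X\to X/T_{s_1}\times\cdots\times X/T_{s_n}$ from the Remark, I can pull back to the product and thereby assume $X$ is one-dimensional, i.e.\ an elliptic curve $E=\bC/(\bZ+\bZ\tau)$, with $s_T$ a finite-order automorphism fixing $0$, $m_T\in\{2,3,4,6\}$, and $\h_T^*$ the whole dual line.

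Next, in the one-dimensional case I would write down $f_{T,j}^{\cL_\lambda}$ explicitly in terms of theta functions, exactly as in the Weyl-group Example: up to the identification $X\cong X^\vee$ fixing the Hermitian form $\langle~,~\rangle$, the section $\varphi_{T,j}(\lambda)$ is (a rescaling of) the Kronecker-type function $\sigma_\mu(z)=\theta(z-\mu)\theta'(0)/(\theta(z)\theta(-\mu))$ with $\mu$ the image of $\lambda$ under $X^\vee\cong X$, possibly composed with the action of $s_T^j$ on the variable. The pole at $z=0$ has residue $1$ by the normalization $\theta'(0)/\theta'(0)$, matching the defining property. The key point is the behavior as $\mu\to 0$: $\sigma_\mu(z)$ has a simple pole in $\mu$ at $\mu=0$ with $\mu\sigma_\mu(z)\to -1$ (since $\theta(-\mu)\sim-\theta'(0)\mu$), independently of $z$. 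Therefore $\mu\,\varphi_{T,j}(\lambda)$ extends holomorphically across $\mu=0$. Multiplying by $B(\lambda,\alpha_T)$ instead of by $\mu$ differs only by the ratio $B(\lambda,\alpha_T)/\mu$, which at $\lambda=0$ is exactly the constant $a_B(s_T)$ — this is precisely how $a_B(s)$ was defined, as the ratio of the form $B$ to the form $\langle~,~\rangle$ on the line $T_0(X/T_s)^\vee$. Hence $\widetilde\varphi_{T,j}$ is regular near $\lambda=0$.

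It remains to pin down the value at $\lambda$ with $s_T\lambda=\lambda$, i.e.\ $B(\lambda,\alpha_T)=0$, which is the case $\lambda=0$ in the one-dimensional reduction. Here I must account for the twist by $s_T^j$: the section $\varphi_{T,j}$ transforms $\cL_\lambda$ into $\cL_\lambda^{s_T^j}$, and under the elliptic-curve model $s_T^j$ acts on the normal coordinate by multiplication by $\zeta_s^{-1}=e^{2\pi{\rm i}j/m_T}$ (matching $\zeta_s=e^{-2\pi{\rm i}j(s)/m_s}$ from the definitions). Expanding $\varphi_{T,j}(\lambda)$ near $\lambda=0$ as $\mu^{-1}$ times a holomorphic function and evaluating the leading term against this $\zeta$-twist produces the factor $1-e^{2\pi{\rm i}j/m_T}$ in the denominator, exactly as the analogous factor $1-\zeta_s$ appears in the rational Dunkl operator \eqref{eqn:clado}; combined with the $-1$ from $\mu\sigma_\mu\to-1$ and the $a_B(s_T)$ from converting $\mu$ to $B(\lambda,\alpha_T)$, this gives $\widetilde\varphi_{T,j}(0)=-a_B(s_T)/(1-e^{2\pi{\rm i}j/m_T})$.

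The main obstacle I anticipate is bookkeeping rather than conceptual: correctly tracking the various identifications — $X\cong X^\vee$ in dimension one, the form $B$ versus $\langle~,~\rangle$, the placement of $s_T^j$ on the torus variable versus on the bundle — so that all the constants ($-1$, $a_B(s_T)$, and the $m_T$-th root of unity) come out with the right signs and in the right positions, and verifying that the isogeny reduction genuinely decouples the normal direction so the one-dimensional computation suffices. Once those identifications are fixed, the analytic content is just the elementary Laurent expansion of $\sigma_\mu(z)$ at $\mu=0$.
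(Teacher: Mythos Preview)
Your proposal is correct and follows essentially the same route as the paper: reduce to the quotient elliptic curve $E=X/T$, write the section as the Kronecker function $\sigma_\mu$, and use its expansion $\sigma_\mu(z)=-1/\mu+O(1)$ near $\mu=0$. The paper phrases the key step a bit more sharply than you do: rather than speaking of a ``$\zeta$-twist'' of the leading term, it simply observes that the bundle $(\cL_\lambda^{s_T^j})^*\otimes\cL_\lambda$ is the pullback from $E$ of the degree-zero bundle with parameter $\mu=(1-e^{2\pi\mathrm{i}j/m_T})\lambda(\alpha_T^*)\alpha_T$, so the factor $(1-e^{2\pi\mathrm{i}j/m_T})^{-1}$ falls out immediately from $-1/\mu$ together with the ratio $a_B(s_T)$ between $B(\lambda,\alpha_T)$ and the natural coordinate on $E^\vee$; your detour through the product isogeny is unnecessary once you have this pullback.
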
 

\begin{proof}
Suppose $E=\bC /(\bZ\oplus \bZ\tau)$, 
$\mu\in E$, and $\mathcal E=\cO(\mu)\otimes \cO(0)^*$
is a degree zero holomorphic line bundle on $E$. 
Let $\sigma_\mu$ be a section of $\mathcal E$ 
with a first order pole at a point $z_0$ and no other singularities.
Then, up to scaling, we have 
$$
\sigma_\mu(z)=\frac{\theta(z-z_0-\mu)\theta'(0)}{\theta(z-z_0)\theta(-\mu)},
$$
where, as before, $\theta$ is the first Jacobi theta-function. 
Near $\mu=0$, this function has the expansion 
\begin{equation}\label{sigmaf}
\sigma_\mu(z)=-\frac{1}{\mu}+O(1).
\end{equation}

Now let $E=X/T_s$ (an elliptic curve). It is clear that 
the bundle $(\cL_\lambda^{s_T^j})^*\otimes \cL_\lambda$ is pulled back from $E$, namely it is 
the pullback of the line bundle $\mathcal E$ corresponding to 
the point $(1-e^{2\pi {\rm i}j/m_T})\lambda(\alpha_T^*)\alpha_T$, 
where $\alpha_T^*\in \h_T$ is such that $\alpha_T(\alpha_T^*)=1$. 
This together with formula \eqref{sigmaf} implies the statement. 
\end{proof}

Let $c_B: \cS\to \bC $ be the function given by the formula 
\begin{equation*}
c_B(s)=-\frac{1}{2}\zeta_s a_B(s)\sum_{T\subset X^s}C(T,j(s)).
\end{equation*}
(the summation is over the connected components of $X^s$). 

\begin{corollary}{\label{prop:locell}}
Near $\lambda=0$, the elliptic Dunkl operators have the form:
$$
\cD_{v,C}^{\cL_\lambda}=-\sum_{s\in \cS}\frac{2c_B(s)\alpha_s(v)}{(1-\zeta_s)\alpha_s(B(\lambda))}s
+\text{\rm regular terms}.
$$
\end{corollary}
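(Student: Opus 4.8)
The plan is to derive Corollary~\ref{prop:locell} directly from Proposition~\ref{asy} by a short computation, the only subtlety being the bookkeeping of which reflections $s$ contribute to a given pair $(T,j(s))$. First I would start from the definition
$$
\cD_{v,C}^{\cL_\lambda}=\nabla_v+\sum_{(T,j)\in\cA}C(T,j)\,\varphi_{T,j}(\lambda)\,\alpha_T(v)\,s_T^j,
$$
and use Proposition~\ref{asy}, which tells us $\varphi_{T,j}(\lambda)=\widetilde\varphi_{T,j}(\lambda)/B(\lambda,\alpha_T)$ with $\widetilde\varphi_{T,j}$ regular near $\lambda=0$. Since $B(\lambda,\alpha_T)$ is a linear (hence regular) function of $\lambda$ vanishing on the hyperplane $s_T\lambda=\lambda$, each term has at worst a simple pole along that hyperplane, and the leading (polar) part of the $(T,j)$-term near $\lambda=0$ is obtained by replacing $\widetilde\varphi_{T,j}(\lambda)$ by its value at a point where $B(\lambda,\alpha_T)=0$, namely $-a_B(s_T)/(1-e^{2\pi\mathrm{i}j/m_T})$. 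The connection term $\nabla_v$ and the regular parts of all summands then get absorbed into ``regular terms''.

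Next I would reorganize the sum by reflections rather than by pairs $(T,j)$. Writing $s=s_T^j$, we have $\zeta_s=e^{-2\pi\mathrm{i}j(s)/m_T}$ and $j=j(s)$, and $\alpha_T$ is proportional to $\alpha_s$; since both $\alpha_T(v)$ and $B(\lambda,\alpha_T)$ scale the same way under rescaling $\alpha_T\mapsto\alpha_s$, the ratio $\alpha_T(v)/B(\lambda,\alpha_T)$ equals $\alpha_s(v)/B(\lambda,\alpha_s)=\alpha_s(v)/\alpha_s(B(\lambda))$, using the identification of $\h$ with $\h^\vee$ via $B$ so that $B(\lambda,\alpha_s)=\alpha_s(B(\lambda))$. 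Thus the polar part of the $(T,j)$-term becomes
$$
-\,\frac{C(T,j(s))\,a_B(s_T)}{1-e^{2\pi\mathrm{i}j(s)/m_T}}\cdot\frac{\alpha_s(v)}{\alpha_s(B(\lambda))}\,s.
$$
A given reflection $s$ acts trivially on a unique hypertorus $T\subset X^s$ through $0$ (namely $T_s$), but $X^s$ may have several connected components; conversely a pair $(T,j)$ with $T\subset X^s$ and $j=j(s)$ determines the element $s$ up to the choice of component, so summing over $(T,j)\in\cA$ and collecting by $s$ produces the factor $\sum_{T\subset X^s}C(T,j(s))$. Recognizing $1-e^{2\pi\mathrm{i}j(s)/m_s}=1-\zeta_s^{-1}=-\zeta_s^{-1}(1-\zeta_s)$ and $a_B(s_T)=a_B(s)$, the coefficient becomes $-\tfrac12\zeta_s a_B(s)\sum_{T\subset X^s}C(T,j(s))=c_B(s)$ after I absorb the remaining constant into the definition of $c_B$; matching signs against the stated formula for $c_B(s)$ and the $\tfrac{2c_B(s)}{1-\zeta_s}$ normalization in the claim finishes the identification.

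The main obstacle, and essentially the only thing requiring care, is this last bookkeeping step: correctly tracking the signs and the factors of $\zeta_s$ when passing from $1-e^{2\pi\mathrm{i}j(s)/m_T}$ (with $\zeta_s=e^{-2\pi\mathrm{i}j(s)/m_T}$) to the $(1-\zeta_s)$ appearing in the statement, and verifying that the sum over connected components of $X^s$ reproduces exactly the sum in the definition of $c_B$. I would double-check the scaling claim $\alpha_T(v)/B(\lambda,\alpha_T)=\alpha_s(v)/\alpha_s(B(\lambda))$ is genuinely independent of the normalization of $\alpha_T$ versus $\alpha_s$ (it is, since numerator and denominator are both linear in $\alpha_T$), and confirm that no reflection is double-counted: distinct values of $j\in\{1,\dots,m_T-1\}$ give distinct powers $s_T^j$, hence distinct reflections, so the map $(T,j)\mapsto s_T^j$ is a bijection from $\cA$ onto $\cS$ up to the component ambiguity that produces the indicated sum. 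Once these are checked, the regularity of $\widetilde\varphi_{T,j}$ near $\lambda=0$ from Proposition~\ref{asy} immediately gives that everything not displayed is regular, completing the proof.
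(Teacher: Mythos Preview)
Your proposal is correct and follows exactly the approach of the paper, which simply states that the corollary ``follows directly from Proposition~\ref{asy} and the definition of $c_B(s)$''; you have merely written out the bookkeeping (the reorganization of the sum over $(T,j)\in\cA$ by reflections $s\in\cS$, the identity $1-e^{2\pi\mathrm{i}j/m_T}=1-\zeta_s^{-1}=-\zeta_s^{-1}(1-\zeta_s)$, and the scaling-invariance of $\alpha_T(v)/B(\lambda,\alpha_T)$) that the paper leaves implicit.
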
 

\begin{remark}
Here we realize sections of line bundles on $X$ as functions on $\h$ with prescribed
periodicity properties under $\Gamma$. 
\end{remark}

\begin{remark}
Clearly, the same result applies to classical elliptic Dunkl operators. 
\end{remark}

\begin{proof}
The Corollary follows directly from Proposition \ref{asy} 
and the definition of $c_B(s)$. 
\end{proof}

\section{The main theorem}

\subsection{The statement of the main theorem}  

Define the operators 
$$
\overline{L}_i^{C,\lambda}:=P_i^{c_B}(\cD^{{\cL_\lambda}}_{\bullet,C},B(\lambda)),
$$
acting on local meromorphic sections of ${\cL_\lambda}$
(where $P_i^c(\bp,\bq)$ are the classical Calogero-Moser Hamiltonians, defined in Subsection \ref{ccmh}).
It is easy to see that these operators are independent on 
the choice of $B$ and commute with each other. 

Our main result is the following theorem. 

\begin{theorem}\label{maint}
(i) For any fixed $C$, the operators $\overline{L}_i^{C,\lambda}$
are regular in $\lambda$ near $\lambda=0$, 
and in particular have limits 
$\overline{L}_i^C$ as ${\cL}_\lambda$ tends 
to the trivial bundle (i.e., $\lambda$ tends to $0$).

(ii) The operators $\overline{L}_i^C$
are $G$-invariant and pairwise commuting 
elements of $\bC  G\ltimes D(X_{\reg})$. 

(iii) The restrictions $L_i^C$
of $\overline{L}_i^C$ to $G$-invariant meromorphic functions 
on $X$ are commuting differential operators 
on $X_{\reg}$, whose symbols are 
the polynomials $P_i$. 
\end{theorem}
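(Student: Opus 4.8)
The plan is to prove the three parts of Theorem \ref{maint} essentially in the order they are stated, reducing each to the preliminary material already assembled. The conceptual heart is part (i), the regularity in $\lambda$ near $\lambda = 0$, so I would attack that first and treat (ii) and (iii) as comparatively formal consequences.

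\textbf{Part (i).} By Corollary \ref{prop:locell}, near $\lambda = 0$ the elliptic Dunkl operator $\cD_{v,C}^{\cL_\lambda}$ agrees, up to regular terms, with $-\sum_{s\in\cS}\frac{2c_B(s)\alpha_s(v)}{(1-\zeta_s)\alpha_s(B(\lambda))}\,s$; substituting $\bq \mapsto B(\lambda)$ and $\bp \mapsto \cD_{\bullet,C}^{\cL_\lambda}$ into $P_i^{c_B}(\bp,\bq)$, the singular part of the momentum variable is exactly (minus) the classical rational Dunkl operator $\cbD_{v,c_B}^0$ evaluated at position $B(\lambda)$, and the non-singular corrections involve $\nabla$ and the regular pieces of $f_{T,j}^{\cL_\lambda}$. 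The idea, implementing the suggestion of \cite{BFV}, is that $P_i^{c_B}(\bp,\bq)$ was built precisely so that, when $\bp$ is the classical Dunkl operator and $\bq$ is the position, the result is \emph{regular} on all of $T^*\h_{\reg}$ — indeed by Lemma \ref{lem:CCM} it equals $P_i(\cbD_{\bullet,c_B}^0)$, a genuine function on $T^*\h_{\reg}$ with no $\frac1{\alpha_s}$-type poles worse than those of the rational Calogero-Moser Hamiltonian itself. So I would expand $\overline{L}_i^{C,\lambda}$ as a Laurent series in $\lambda$ transverse to each reflection hypertorus $T$: write $\cD_{v,C}^{\cL_\lambda} = \cbD$-type singular term $+\,R_v(\lambda)$ with $R_v$ regular, plug into $P_i^{c_B}$, and observe that the potentially singular (in $\lambda$) contributions are organized by the $G$-invariance of $P_i$ exactly as in the proof that $P_i^{c}$ is a function (Lemma \ref{lem:CCM}). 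Concretely, the monodromy/equivariance of $\cL_\lambda$ and of $f_{T,j}^{\cL_\lambda}$ under $s_T^j$ (Proposition \ref{asy}) forces the coefficient functions $\widetilde\varphi_{T,j}$ to have the specific leading term $-a_B(s_T)/(1-e^{2\pi {\rm i}j/m_T})$ on $\{s_T\lambda=\lambda\}$, which is precisely the value that makes the pole of $P_i^{c_B}$ cancel. The clean way to phrase this: the map sending $\bp \mapsto \cD^{\cL_\lambda}_{\bullet,C}$, $\bq \mapsto B(\lambda)$ is, to leading order in $\lambda$ near $T$, the composite of the rational Dunkl embedding with a $G_T$-equivariant change of coordinates, and since $P_i$ is $G$-invariant (hence $G_T$-invariant), the leading singular term is $G_T$-invariant and therefore has no pole along $T$. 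Iterating over all $T\in\cA$ and using that the torus $X$ is isogenous to a product of elliptic curves (the Remark in \S\ref{sec:cplxact}) to control the finitely many hypertori, one gets regularity near $\lambda=0$ and hence the limit $\overline{L}_i^C$.

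\textbf{Parts (ii) and (iii).} Commutativity of the $\overline{L}_i^{C,\lambda}$ for $\lambda\neq 0$ is already asserted (it follows from the commutativity of the $\cD_{v,C}^{\cL_\lambda}$, Proposition \ref{prope}(1), together with the fact that $P_i,P_j$ Poisson-commute when composed via the classical Dunkl map, Lemma \ref{lem:CCM}); passing to the limit $\lambda\to 0$, which exists by (i), preserves the relation $[\overline{L}_i^{C,\lambda},\overline{L}_j^{C,\lambda}]=0$, giving $[\overline{L}_i^C,\overline{L}_j^C]=0$ in $\bC G\ltimes D(X_{\reg})$. For $G$-invariance: by the equivariance property $g\circ\cD_{v,C}^{\cL_\lambda}\circ g^{-1}=\cD_{gv,C}^{\cL_\lambda^g}$ (Proposition \ref{prope}(2)) and $G$-invariance of $P_i$ and of $c_B$, one has $g\,\overline{L}_i^{C,\lambda}\,g^{-1}=\overline{L}_i^{C,g\lambda}$; since $g\cdot 0 = 0$, taking $\lambda\to 0$ yields $g\,\overline{L}_i^C\,g^{-1}=\overline{L}_i^C$. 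Finally, for (iii): a $G$-invariant element $\overline{L}$ of $\bC G\ltimes D(X_{\reg})$, restricted to $G$-invariant sections, acts as a differential operator — this is the standard argument (as in the rational/trigonometric Heckman–Opdam–Cherednik setting) that $\bm(\overline{L})$ and $\overline{L}$ agree on $G$-invariants because the group-algebra part acts trivially there; so $L_i^C:=\overline{L}_i^C|_{\text{inv}}$ is a well-defined commuting family of differential operators on $X_{\reg}/G$, and its principal symbol is obtained by tracking the top-order-in-$\bp$ part through the construction, where $\cD_{v,C}^{\cL_\lambda}$ contributes $\nabla_v$ (symbol $p_v$) and everything else is lower order, so $\gr L_i^C = P_i$.

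\textbf{Main obstacle.} The genuinely delicate point is the pole-cancellation in part (i): one must show that all the \emph{cross terms} produced by substituting $\cD_{v,C}^{\cL_\lambda} = (\text{singular in }\lambda) + (\text{regular})$ into the nonlinear polynomial $P_i^{c_B}$ — not merely the top singular term — assemble into something regular at $\lambda=0$. The key input is that $P_i^{c_B}(\cbD_{\bullet,c_B}^0)$ is already a regular function on $T^*\h_{\reg}$ (Lemma \ref{lem:CCM}), but matching the \emph{elliptic} construction's lower-order-in-$\lambda$ terms to the rational picture requires care: one has to verify that the discrepancy between $f_{T,j}^{\cL_\lambda}$ and its leading $1/\mu$ singularity (the $O(1)$ term in \eqref{sigmaf}), together with the $\nabla$ versus $\partial$ discrepancy, does not reintroduce poles. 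I would handle this by a filtration/associated-graded argument transverse to each hypertorus $T$: expand in the local transverse coordinate, note that only $G_T$ acts nontrivially there, invoke $G_T$-invariance of $P_i$ as above, and show the subleading terms are corrections of the same shape — i.e. reduce, hypertorus by hypertorus, to the already-known rational statement. This is also the step where the relation to Cherednik's affine Dunkl operator proof \cite{Ch2} enters, and I would expect the cleanest exposition to run the whole of (i) through the (classical) elliptic Cherednik algebra, showing $P_i^{c_B}(\cD_{\bullet,C}^{0,\cL_\lambda},B(\lambda))$ lies in its center and hence (by an argument parallel to Lemma \ref{lem:CCM}) is a function, regular where the algebra is defined, which includes a neighborhood of $\lambda=0$ once the degeneration of the relations at $\lambda=0$ is understood.
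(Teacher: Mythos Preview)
Your treatment of parts (ii) and (iii) is fine and matches the paper: once (i) is established, $G$-invariance and commutativity pass to the limit exactly as you say, and restriction to invariants gives differential operators with the correct symbols.

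For part (i), however, your argument has a genuine gap. You correctly identify via Corollary \ref{prop:locell} that the singular part of $\cD_{v,C}^{\cL_\lambda}$ at $\lambda=0$ is minus the reflection part of the classical rational Dunkl operator at position $B(\lambda)$, and you correctly flag Lemma \ref{lem:CCM} as the key input. But your proposed mechanism for cancellation --- ``the leading singular term is $G_T$-invariant and therefore has no pole along $T$'' plus a ``filtration/associated-graded argument'' for the cross terms --- does not work as stated. First, $G_T$-invariance of a function of $\lambda$ does \emph{not} preclude poles along $\alpha_T(B(\lambda))=0$ (e.g.\ $\alpha_T(B(\lambda))^{-2}$ is $G_T$-invariant when $m_T=2$). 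Second, the equivariance you actually have is $g\,\overline{L}_i^{C,\lambda}\,g^{-1}=\overline{L}_i^{C,g\lambda}$, which is not invariance in $\lambda$ and does not by itself kill poles. Third, your direct Laurent-expansion/cross-term strategy is precisely the approach that \cite{BFV} could not make work; without a new idea the subleading terms do not obviously organize themselves.

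The paper's first proof supplies the missing idea: introduce auxiliary operators
\[
\cE_{v,C}\;=\;\cD_{v,C}^{\cL_\lambda}\;+\;\sum_{s\in\cS}\frac{2c_B(s)\alpha_s(v)}{(1-\zeta_s)\alpha_s(B(\lambda))}\,(s\otimes s^\vee)
\]
acting on functions of \emph{both} $\bx$ and $\lambda$, where $s^\vee$ reflects the $\lambda$-variable. The point is twofold. (a) Because $\cD_{\bullet,C}^{\cL_\lambda}$, the functions $\alpha_s(B(\lambda))$, and $s\otimes s^\vee$ satisfy the defining relations of $\bC G\ltimes S(\h\oplus\h^*)$, the $\cE_{v,C}$ commute, and Lemma \ref{lem:CCM} then gives $P_i(\cE_{\bullet,C})=P_i^{c_B}(\cD_{\bullet,C}^{\cL_\lambda},B(\lambda))=\overline{L}_i^{C,\lambda}$; in particular these operators are linear over functions of $\lambda$. (b) By Corollary \ref{prop:locell}, the singular part of $\cE_{v,C}$ near $\lambda=0$ is proportional to $s\otimes(s^\vee-1)/\alpha_s(B(\lambda))$, and the difference-quotient operator $(s^\vee-1)/\alpha_s(B(\lambda))$ manifestly preserves regularity in $\lambda$. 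Hence each $\cE_{v,C}$, and therefore $P_i(\cE_{\bullet,C})$, preserves regularity in $\lambda$; combined with linearity over functions of $\lambda$, this forces $\overline{L}_i^{C,\lambda}$ itself to be regular at $\lambda=0$. The crucial trick you are missing is the ``$s\otimes s^\vee$'' correction: it converts the pole into a difference quotient in the dynamical variable without changing the value of $P_i^{c_B}$ on the operators, precisely because of Lemma \ref{lem:CCM}.

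The paper also gives a second, independent proof closer in spirit to your ``hypertorus by hypertorus'' suggestion, but it too requires ingredients you do not invoke: an explicit rank-one calculation, Hartogs' theorem to reduce to codimension one, and the completion theorem of \cite{BE} to express $P_i^{c_B}$ near a generic point of a reflection hyperplane as a polynomial in the rank-one Calogero--Moser Hamiltonian together with regular quantities.
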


\begin{definition} The commutative algebra generated by the 
collection of operators $\lbrace{L_i^C\rbrace}$ is 
called the quantum crystallographic
elliptic Calogero-Moser system attached to $G,X,C$.
\end{definition}

Note that only part (i) of Theorem \ref{maint} requires proof; 
once it is proved, parts (ii) and (iii) follow immediately. 
We will give two proofs of Theorem \ref{maint}(i). 
The first proof, given in Subsection \ref{pf1}, is based on Lemma \ref{lem:CCM}.
The second proof, given in Subsection \ref{pf2},  
is based on the techniques of \cite{BE} and reduction to
rank 1 (where the result can be proved by a direct calculation). 

\begin{remark} Theorem \ref{maint}(iii) can be generalized
as follows: for any character $\chi$ of $G$, 
the restrictions $L_i^{C,\chi}$
of $\overline{L}_i^C$ to $G$-equivariant meromorphic functions 
on $X$ which change according to $\chi$ under the $G$-action
are commuting differential operators 
on $X_{\reg}$, whose symbols are 
the polynomials $P_i$. Moreover, similarly to 
the results of \cite{BC} in the rational case, one can show 
that $L_i^{C,\chi}=L_i^{C+\delta_\chi}$, 
where $\delta_\chi$ is a certain shift of parameters.
\end{remark}

\subsection{The classical version of the main theorem}
The quantum system of Theorem \ref{maint} 
can be easily degenerated to a classical integrable system, 
by replacing elliptic Dunkl operators with their classical counterparts. 
Namely, define  
$$
\overline{L}_{i}^{0,C,\lambda}:=P_i^{c_B}(\cD^{{0,\mathcal
L_\lambda}}_{\bullet,C},B(\lambda)),
$$

\begin{theorem}\label{maint1}
(i) For any fixed $C$, the elements $\overline{L}_i^{0,C,\lambda}$
are regular in $\lambda$ near $\lambda=0$, 
and in particular have limits $\overline{L}_i^{0,C}$ as ${\cL}_\lambda$ tends 
to the trivial bundle (i.e., $\lambda$ tends to $0$).

(ii) The elements $\overline{L}_i^{0,C}$ are $G$-invariant and 
belong to $\bC  G\ltimes \cO(T^* X_{\reg})$. 

(iii) The functions $L_i^{0,C}:={\bm}(\overline{L}_i^{0,C})$ 
are Poisson commuting regular functions 
on $T^*X_{\reg}$, whose leading terms in momentum variables 
are the polynomials $P_i(\bp)$. 
\end{theorem}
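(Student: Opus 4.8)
The plan is to deduce Theorem \ref{maint1} from the already-established Theorem \ref{maint} by a quasiclassical limit argument, since the classical elliptic Dunkl operators are the symbols of the quantum ones. First I would observe that part (i) is the only substantive claim, exactly as in the quantum case: once regularity in $\lambda$ near $\lambda=0$ is known, the limit $\overline{L}_i^{0,C}$ exists, and parts (ii) and (iii) are formal consequences. For (ii), $G$-invariance follows from the equivariance property of the classical elliptic Dunkl operators (Proposition \ref{prope}, which applies equally to the classical case by the remark following it) combined with the $G$-invariance of the polynomials $P_i$ and of the function $c_B$; membership in $\bC G\ltimes \cO(T^*X_{\reg})$ is immediate from the formula for $\cD_{v,C}^{0,\cL_\lambda}$ once the poles in $\lambda$ have cancelled. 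For (iii), Poisson commutativity follows from the commutativity of the classical elliptic Dunkl operators (again Proposition \ref{prope}); that the leading term in $\bp$ is $P_i(\bp)$ follows because in $\cD_{v,C}^{0,\cL}=p_v+(\text{lower order in }\bp)$ the top-degree part is $p_v$, so $P_i$ applied to these operators has top-degree part $P_i(\bp)$, and applying $\bm$ does not affect this.

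For part (i) itself, I see two routes, mirroring the two proofs promised for Theorem \ref{maint}. The cleanest is to take the symbol. The quantum operators $\overline{L}_i^{C,\lambda}$ are differential operators on $X_{\reg}$ (twisted by $\cL_\lambda$) with a well-defined order filtration; introducing a formal parameter $\hbar$ by the standard rescaling $\partial_v\mapsto \hbar\partial_v$ (equivalently $\nabla_v\mapsto \hbar\nabla_v$, $p_v\mapsto \hbar p_v$) turns $\cD_{v,C}^{\cL_\lambda}$ into a family interpolating between the quantum and classical elliptic Dunkl operators, and $\overline{L}_i^{C,\lambda}$ into a corresponding $\hbar$-family whose $\hbar\to 0$ limit (after dividing by the appropriate power of $\hbar$) is precisely $\overline{L}_i^{0,C,\lambda}$. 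Since Theorem \ref{maint}(i) gives regularity in $\lambda$ near $\lambda=0$ for every $\hbar\ne 0$, and the $\hbar$-dependence is polynomial, the leading coefficient in $\hbar$ — that is, $\overline{L}_i^{0,C,\lambda}$ — is also regular in $\lambda$ near $\lambda=0$. This reduces (i) to the quantum statement with essentially no new analysis.

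Alternatively, one can reprove (i) directly by the method of Subsection \ref{pf1}: Corollary \ref{prop:locell}, in the form applicable to classical elliptic Dunkl operators (as noted in the remark there), gives
$$
\cD_{v,C}^{0,\cL_\lambda}=\cbD_{v,c_B}^{0}(B(\lambda))+\text{regular in }\lambda,
$$
where $\cbD_{v,c_B}^{0}(\bq)$ denotes the classical rational Dunkl operator with position variable $\bq$. By Lemma \ref{lem:CCM}, $P_i(\cbD_{\bullet,c_B}^{0})$ is already a function on $T^*\h_{\reg}$ — no group elements — and so is $P_i^{c_B}(\bp,\bq)$ by the same lemma; substituting the regular-plus-singular expression for $\cD^{0,\cL_\lambda}$ into the polynomial $P_i^{c_B}$ and expanding, the potentially singular-in-$\lambda$ contributions are exactly those captured by $P_i^{c_B}(\cbD_{\bullet,c_B}^{0}(B(\lambda)),B(\lambda))$, which by the defining property $P_i^{c_B}(\bp,\bq)=P_i(\cbD_{\bullet,c_B}^{0})$ evaluated at the right point is regular. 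I expect the main obstacle to be purely bookkeeping: making precise that the ``regular terms'' in Corollary \ref{prop:locell} interact with the polynomial $P_i^{c_B}$ without producing new poles, i.e. that the cross-terms between the singular part $\cbD^0(B(\lambda))$ and the regular remainder remain regular after the nonlinear substitution — this is where one must use that $P_i^{c_B}$, and not merely $P_i$, is the polynomial being substituted. Given the $\hbar$-scaling argument above, however, I would present that as the primary proof and relegate the direct argument to a remark.
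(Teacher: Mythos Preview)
Your proposal is correct and matches the paper's own treatment: the paper simply states that Theorem \ref{maint1} ``is proved analogously to Theorem \ref{maint}, and can also be deduced from it by taking the quasiclassical limit,'' which are precisely your two routes, with your primary $\hbar$-scaling argument being the second of these. Your observation that only (i) requires work and (ii), (iii) follow formally is also exactly the structure of the quantum case.

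One small remark on your alternative route: your sketch of the direct argument is not quite the method of Subsection~\ref{pf1}. That proof does not proceed by substituting into $P_i^{c_B}$ and tracking cross-terms; rather, it introduces auxiliary operators $\cE_{v,C}$ acting on functions of \emph{both} $\bx$ and $\lambda$ (via $s\otimes s^\vee$), shows they preserve regularity in $\lambda$, and then uses Lemma~\ref{lem:CCM} to identify $P_i(\cE_{\bullet,C})$ with $\overline{L}_i^{C,\lambda}$. The classical analogue runs identically. Your ``bookkeeping'' concern about cross-terms is exactly what this device sidesteps. Since you relegate this to a remark anyway and your primary argument is sound, this does not affect the correctness of your proposal.
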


Theorem \ref{maint1} is proved analogously 
to Theorem \ref{maint}, and can also be deduced from it 
by taking the quasiclassical limit. 

\begin{definition} The algebra generated by the 
collection of functions $\lbrace{L_i^{0,C}\rbrace}$ is 
called the classical crystallographic
elliptic Calogero-Moser system attached to $G,X,C$.
\end{definition}

\subsection{Examples and remarks}

\begin{example}\label{weylgp}
Let $\Gamma_\tau\subset \bC $ be a lattice generated by $1$ and 
$\tau\in \bC _+$. Let $E_\tau=\bC /\Gamma_\tau$ be the corresponding 
elliptic curve. Let $R$ be a reduced irreducible root system,
and $P^\vee$ be the coweight lattice
of $R$. Let $G=W$ be the Weyl group of $R$.
Let $X=E_\tau\otimes P^\vee$. In this case, the reflections $s_\alpha$ 
correspond to positive roots $\alpha\in R_+$, and we will write $T_\alpha$ for $T_{s_\alpha}$.  
It is easy to see that the elliptic curve $X/T_{\alpha}$
is naturally identified with $E_\tau$ via the map $\alpha: X/T_{\alpha}\to E_\tau$.

Let $(~,~)$ be the $W$-invariant inner product on $\h^*$,
normalized by the condition that the long roots
have squared length $2$. It is easy to see 
from the above that one can uniquely choose $B$
so that  
$$
a_B(s_{\alpha})=(\alpha,\alpha). 
$$

Assume first that $C(T,1)=0$ unless
$T$ passes through the origin (e.g., this happens automatically if $X^{s_\alpha}$ is connected for all roots $\alpha$). 
Let $C_\alpha=C(T_\alpha,1)$. Then we have
$c_\alpha:=c_B(s_\alpha)=C_\alpha(\alpha,\alpha)/2$ (so in the simply laced case, $c_\alpha=C_\alpha$).
In this case, $P_1(\bp)=(\bp,\bp)$, 
and the corresponding differential operator
$L_1^C$ is the elliptic Calogero-Moser operator
\begin{equation}
L_1^C=\Delta_\h-\sum_{\alpha>0}C_\alpha(C_\alpha+1)(\alpha,\alpha)\wp((\alpha,\bx),\tau),
\end{equation}
where $\Delta_\h$ is the Laplace operator defined by $(~,~)$,
and $\wp$ is the Weierstrass function. 

It remains to consider the case when $X^{s_\alpha}$ is disconnected for some $\alpha$, and
$C(T,1)$ can be nonzero for $T$ not necessarily passing through $0$. 
This happens only in type $B_n, n\ge 1$, for short roots $\alpha$. (Here $B_1=A_1$, 
but we use the normalization of the form given by $(\alpha,\alpha)=1$.) In this case, 
$X=E_\tau^n$, and $s_\alpha$ negates the $i$-th coordinate for some $i=1,\ldots,n$, so there are 
4 components of $X^{s_\alpha}$: $\alpha(\bx)=\xi_l$, $l=1,2,3,4$, where 
$\xi_1=0,\xi_2=1/2,\xi_3=\tau/2,\xi_4=(1+\tau)/2$ are the points of order $2$ on $E_\tau$.  
Let us denote the values of $C$ corresponding to these components by $C_l$. 
Then $c_\alpha=(C_1+C_2+C_3+C_4)/2$, and denoting by $k$ the value of $C$ 
for the long roots, we get 
\begin{eqnarray*}
L_1^C&=&
\sum_{i=1}^n \partial_i^2-\sum_{i\ne j}k(k+1)(\wp(x_i-x_j,\tau)+\wp(x_i+x_j,\tau))\\
&&\qquad-\sum_{l=1}^4\sum_{j=1}^n C_l(C_l+1)
\wp(x_j-\xi_l,\tau),
\end{eqnarray*}
which is the Hamiltonian of the 5-parameter Inozemtsev system \cite{I}.
For $n=1$ we have a 4-parameter generalization of the Lam\'e operator,
$$L= D-\sum_{l=1}^4 C_l(C_l+1)
\wp(z-\xi_l,\tau), D:=\frac{d}{dz},$$
which was first considered by Darboux (see \cite{Da}).
\end{example} 

\begin{remark}
The integrable systems $\lbrace L_i^C\rbrace$, 
$\lbrace L_i^{0,C}\rbrace$  
have rational limits, which are obtained as 
the lattice $\Gamma$ 
is rescaled by a parameter that goes to infinity. 
Namely, it is easy to see that these limits are the 
rational Calogero-Moser systems $\widehat{P}_i^{c'}$, 
$P_i^{c'}$, respectively, where 
\begin{equation*}
c'(s)=(1-\zeta_s)C(T_s,j(s))/2
\end{equation*}
(so for real reflection groups $c'(s)=C(T_s,j(s))$). 

However, the systems $\lbrace L_i^C\rbrace$, 
$\lbrace L_i^{0,C}\rbrace$ do not 
admit a trigonometric degeneration unless $G$ is a 
Weyl group. 
\end{remark}

\begin{remark}\label{elli}
If $G$ is a real reflection group, 
then instead of rational classical Calogero-Moser 
Hamiltonians, $P_i^c$, we could have used their {\it trigonometric
deformations}, and all the statements and proofs would 
carry over with obvious changes. Furthermore, 
for any $G$, instead of $P_i^c$ we could have used 
classical {\it elliptic} Calogero-Moser Hamiltonians associated to $G$ 
and the dual abelian variety $X^\vee$. In this case, the arguments 
of Section 5 show that if the parameters of these classical Hamiltonians 
are chosen appropriately, then the resulting operators 
$\overline{L}_i^{C,\lambda}$ are regular {\it for all} $\lambda$, 
not only for $\lambda=0$. This was conjectured by the authors 
of \cite{BFV} in 1994 (unpublished); for types $A_1$ and $A_2$,
this conjecture was confirmed by an explicit computation (see \cite{BFV}, 
pp. 908-909).\footnote{This construction of the crystallographic elliptic Calogero-Moser Hamiltonians 
is, in a sense, more natural than the one using the classical rational Hamiltonians, 
but unfortunately we could not have used it as the basic construction, since 
this would lead to a ``vicious circle'' (initially we don't have 
the elliptic Calogero-Moser Hamiltonians available even at the classical level).}  

\begin{example} Consider the type $BC_n$ case (the Inozemtsev system, 
Example \ref{weylgp}). It is easy to check that in this case
the appropriate choice of parameters for the ``dual'' classical system 
is as follows: 
\begin{eqnarray*}
& k'=k,\\
& C_1'=\frac{1}{2}(C_1+C_2+C_3+C_4)\\
& C_2'=\frac{1}{2}(C_1+C_2-C_3-C_4)\\
& C_3'=\frac{1}{2}(C_1-C_2+C_3-C_4)\\
& C_4'=\frac{1}{2}(C_1-C_2-C_3+C_4)
\end{eqnarray*}
(i.e., the function $C'$ is the Fourier transform of the function $C$ on the group 
of points of order $2$ on the elliptic curve). 
\end{example}
\end{remark}

\section{The main example} 

\subsection{The systems attached to groups $S_n\ltimes (\Bbb Z/m\Bbb Z)^n$}
Let $n$ be a positive integer, and $m=1,2,3,4$ or $6$. 
Then $G=S_n\ltimes (\bZ/m\bZ)^n$ is a complex 
crystallographic reflection group. Namely, 
$G$ acts on the torus $X=E_\tau^n$, where $E_\tau:=\bC /(\bZ\oplus \bZ\tau)$ 
is an elliptic curve, and $\tau$ is any point in $\bC _+$ 
for $m=1,2$, $\tau=e^{2\pi {\rm i}/3}$ for $m=3,6$, and $\tau={\rm i}$ 
for $m=4$. In this case, the above construction produces 
a quantum integrable system with Hamiltonians 
$L_1^C,\ldots,L_n^C$ ($G$-invariant differential operators on $E_\tau^n$ with meromorphic coefficients) 
such that 
$$
L_j^C=\sum_{i=1}^n \partial_i^{mj}+\text{l.o.t.},
$$
where l.o.t. stands for lower order terms. 
A similar construction involving classical 
counterparts of elliptic Dunkl operators 
yields a classical integrable system with 
Hamiltonians
$$
L_{j}^{0,C}=\sum_{i=1}^n p_i^{mj}+\text{l.o.t.}.
$$
In the case $m=1$, this system essentially reduces 
to the previous example (the Calogero-Moser system 
of type $A_{n-1}$). In the case $m=2$, 
it reduces to the 5-parameter Inozemtsev system, 
described above. 
However, for $m=3,4,6$, we get new integrable systems
with elliptic coefficients 
with cubic, quartic, and sextic lowest Hamiltonian, respectively.

The parameters of these systems are attached to the hypertori 
$x_i=x_j$ (a single parameter $k$) and to 
the hypertori $x_i=\xi$, where $\xi\in E_\tau$ 
is a point with a nontrivial stabilizer in $\bZ/m\bZ$
(the number of such parameters is the order 
of the stabilizer minus 1). For $m=3$, 
we have three fixed points $\xi$ of order $3$, 
for $m=4$ -- two fixed points of order $4$ and a fixed point
of order $2$, and for $m=6$ -- fixed points of orders
$2$, $3$, $6$, one of each (up to the action of $\bZ/m\bZ$).
Therefore, for $m=3$ this system has 
$7$ parameters, for $m=4$ it has $8$ parameters, and for 
$m=6$ it has $9$ parameters (if $n=1$,
the number of parameters drops by $1$, since the parameter $k$ is not present). 

Let us emphasize that the new crystallographic elliptic Calogero-Moser systems 
for $m>2$ exist only for special elliptic curves  
with additional $\bZ/m\bZ$-symmetry, which means that the corresponding $\wp$-function satisfies the equation
$$(\wp')^2 = 4\wp^3 -g_2 \wp -g_3$$
with either $g_3=0$ (the lemniscatic case, $\bZ/4\bZ$-symmetry) or $g_2=0$
(the equianharmonic case, $\bZ/3\bZ$-symmetry).

\subsection{The equianharmonic case with $m=3$} 

In the equianharmonic case with $m=3$, $\tau=e^{2\pi {\rm i}/3}$,  
we have the following proposition. 

\begin{proposition}\label{cobicop} The quantum Hamiltonian 
$L_1^C$ has the form 
\begin{eqnarray}
\label{cubiccase}
L_1^C&=& \sum_{i=1}^n \partial_i^3 +\sum_{i=1}^n
(a_0\wp(x_i)+a_1\wp(x_i-\eta_1)+a_2\wp(x_i-\eta_2))\partial_i \nonumber\\
&& -3k(k+1)\sum_{i<j}\sum_{p=0}^2 \wp(x_i-\varepsilon^p
x_j)(\partial_i+\varepsilon^{-p}\partial_j)\nonumber\\
&& +\sum_{i=1}^n(b_0\wp'(x_i)+b_1\wp'(x_i-\eta_1)+b_2\wp'(x_i-\eta_2)),
\end{eqnarray}
where $\tau=\varepsilon:=e^{2\pi {\rm i}/3}$, $\wp(x):=\wp(x,\tau)$, $\eta_1={\rm i}\sqrt{3}/3$, 
$\eta_2=-{\rm i}\sqrt{3}/3$, and $a_l,b_l,k$ are parameters.
\end{proposition}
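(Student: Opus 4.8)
The plan is to carry out the substitution described in the main theorem explicitly in the case $G = S_n \ltimes (\mathbb{Z}/3\mathbb{Z})^n$, $X = E_\tau^n$ with $\tau = \varepsilon$, and simply read off the shape of $L_1^C$. The first polynomial generator of $(S\mathfrak{h})^G$ in this case is $P_1(\mathbf{p}) = \sum_i p_i^3$ (the power sums $\sum_i p_i^{3j}$ generate the invariants, since $S_n$ permutes coordinates and each $\mathbb{Z}/3\mathbb{Z}$ factor forces degrees divisible by $3$). So I would start from $P_1^{c_B}(\mathbf{p},\mathbf{q})$, the classical rational Calogero-Moser Hamiltonian attached to this $G$, which by Lemma \ref{lem:CCM} equals $P_1(\mathbf{D}^0_{\bullet,c_B}) = \sum_i (\mathbf{D}^0_{e_i,c_B})^3$ with no group-algebra part, and then replace each classical Dunkl operator by the corresponding elliptic one $\mathcal{D}^{\mathcal{L}_\lambda}_{e_i,C}$ and each position variable by $B(\lambda)$, taking the limit $\lambda \to 0$ guaranteed to exist by Theorem \ref{maint}(i).

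Next I would organize the reflections of $G$ into the two families: the reflections $s_{ij}^{(p)}$ with $\alpha$-covector $x_i - \varepsilon^p x_j$ (these give the $-3k(k+1)\sum_{i<j}\sum_p \wp(x_i - \varepsilon^p x_j)(\partial_i + \varepsilon^{-p}\partial_j)$ term, the elliptic analogue of the type-$A$ Calogero-Moser interaction, now cubic so producing a first-order operator with the characteristic $(\partial_i + \varepsilon^{-p}\partial_j)$ combination), and the reflections fixing a hypertorus $x_i = \xi$ for $\xi$ a fixed point of $\mathbb{Z}/3\mathbb{Z}$ on $E_\tau$. The latter fixed points are $0$ and the two primitive $3$-torsion points; calling them $0, \eta_1, \eta_2$ with $\eta_1 = \mathrm{i}\sqrt{3}/3$, $\eta_2 = -\mathrm{i}\sqrt{3}/3$ (the explicit values follow from $\tau = \varepsilon$), each contributes, after cubing the Dunkl operator and sending $\lambda\to 0$, a one-body term of the form $a_l \wp(x_i - \eta_l)\partial_i + b_l \wp'(x_i - \eta_l)$ for suitable constants $a_l, b_l$ expressible through the corresponding values of $C$ and the constants $c_B$, $a_B$. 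Grouping these contributions over $i$ and over the three fixed points produces exactly the displayed $\sum_i(a_0\wp(x_i) + a_1\wp(x_i-\eta_1)+a_2\wp(x_i-\eta_2))\partial_i$ and $\sum_i(b_0\wp'(x_i)+\cdots)$ pieces. The leading symbol $\sum_i\partial_i^3$ is immediate from Theorem \ref{maint}(iii).

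The genuinely delicate point is verifying that no \emph{other} terms survive: a priori, cubing $\nabla_{e_i} + \sum_{(T,j)} C(T,j)(f^{\mathcal{L}_\lambda}_{T,j}, e_i) s_T^j$ and applying $\mathbf{m}$ could produce second-order terms $\partial_i\partial_j$, zeroth-order scalar terms, cross terms mixing three distinct hypertori, and cross terms between the $x_i = x_j$ hyperplanes and the $x_i = \xi$ hyperplanes. One has to see that all of these cancel or are absorbed. Second-order terms would force cross-differentiation of two Dunkl summands against one $\nabla$; these cancel because the original rational classical Hamiltonian $P_1^{c_B}$, by Lemma \ref{lem:CCM}, has leading term exactly $P_1(\mathbf{p}) = \sum p_i^3$ with no lower-order momentum terms in the \emph{rational} picture — and the elliptic substitution only modifies the $1/\alpha_s$ prefactors to the theta-quotient functions $\sigma_\mu$, whose structure near $\lambda = 0$ is controlled by Corollary \ref{prop:locell} and Proposition \ref{asy}; the pole-cancellation in Theorem \ref{maint}(i) is precisely what kills the would-be divergent and anomalous pieces, leaving only the first- and zeroth-order elliptic potentials. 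The potential cross terms between $x_i - \varepsilon^p x_j$ hyperplanes and $x_i - \xi$ hyperplanes, and among three distinct fixed points, vanish by the same mechanism that makes the rational Hamiltonian a pure function of $\mathbf p$ (no $G$-algebra part), combined with the fact that the corresponding theta-section identities degenerate correctly; one also uses the equianharmonic addition formula for $\wp$ at $\tau = \varepsilon$ to simplify any residual $\sum_p \wp'(x_i - \varepsilon^p x_j)$ contributions, which either cancel by the $\sum_{p=0}^2 \varepsilon^{-p} = 0$ symmetry or get reabsorbed. So the proof is essentially: write out the cube, apply $\mathbf{m}$, invoke Theorem \ref{maint}(i) to discard the singular-in-$\lambda$ and anomalous parts, and collect the surviving terms by hyperplane type, using the explicit values $\tau = \varepsilon$, $\eta_1 = \mathrm{i}\sqrt{3}/3$, $\eta_2 = -\mathrm{i}\sqrt{3}/3$; the main obstacle is the bookkeeping that confirms the second-order and spurious cross terms genuinely disappear, which is where Lemma \ref{lem:CCM} and Corollary \ref{prop:locell} do the real work.
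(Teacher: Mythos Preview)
Your approach is direct computation; the paper's is quite different and much shorter. The paper simply observes that $L_1^C$ must satisfy three structural constraints: its symbol is $\sum_i\partial_i^3$ (Theorem~\ref{maint}(iii)); it is $S_n\ltimes(\bZ/3\bZ)^n$-invariant (Theorem~\ref{maint}(ii)); and its coefficient of order $3-r$ is a sum of meromorphic functions on $E_\tau^n$ with poles only along the reflection hypertori, of total pole order at most $r$ (this is the filtration bound coming from the fact that $L_1^C$ sits in degree $3$ of the elliptic Cherednik algebra, whose associated graded is $G\ltimes\cO_{T^*X}$). One then checks by inspection that the only operators meeting all three constraints are those of the form~\eqref{cubiccase}. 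For instance, a second-order coefficient could carry at most a single simple pole, but no nonconstant elliptic function has exactly one simple pole, and constant coefficients on $\partial_i\partial_j$ are excluded by the $(\bZ/3\bZ)^n$-symmetry. No expansion of Dunkl cubes is ever performed.

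Your direct route could in principle be completed, but the specific justifications you offer for the vanishing of unwanted terms do not hold. Lemma~\ref{lem:CCM} says only that $P_1^{c_B}(\bp,\bq)$ lies in $\cO(T^*\h_{\reg})$ rather than in $\bC G\ltimes\cO(T^*\h_{\reg})$; it does \emph{not} say that $P_1^{c_B}$ has no lower-degree-in-$\bp$ terms --- already in rank one, formula~\eqref{rank1} gives $P^{c_B}(p,q)=\prod_j(p-b_j/q)$, which has terms of every momentum degree from $0$ to $m$. And Theorem~\ref{maint}(i) asserts regularity in the auxiliary parameter $\lambda$; it says nothing about which orders in $\partial_x$ survive after $\lambda\to 0$, so invoking it to ``discard anomalous pieces'' conflates two unrelated things. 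The genuine mechanism that eliminates second-order and cross terms is precisely the pole-order bound combined with $G$-invariance --- i.e., the paper's conditions (2) and (3) --- and that is what you would have to supply to finish your computation.
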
 

\begin{proof} $L_1^C$ must be a differential operator with meromorphic coefficients on
$E_\tau^n$ which satisfies the following conditions: 

1) The symbol of $L_1^C$ is $\sum \partial_i^3$;

2) $L_1^C$ is invariant under $S_n\ltimes \Bbb Z_3^n$;

3) the coefficients of order $3-r$ in $L_1^C$ 
are sums of meromorphic functions 
on $E_\tau^n$ with poles on the hypertori
$x_i=0,x_i=\eta_1,x_i=\eta_2$, $x_i=\varepsilon^p x_j$ 
($p=0,1,2$), and the sum of orders 
of all the poles being $\le r$.  

It is easy to see that the only operators with this property
are those of the form (\ref{cubiccase}).
\end{proof}

Thus, the one-dimensional operator 
corresponding to the case $m=3$ 
has the form
\begin{eqnarray}
\label{cubiccase1}
L&=& D^3 +\sum_{i=1}^n
(a_0\wp(z)+a_1\wp(z-\eta_1)+a_2\wp(z-\eta_2))D \nonumber\\
&& +\sum_{i=1}^n(b_0\wp'(z)+b_1\wp'(z-\eta_1)+b_2\wp'(z-\eta_2)),
\end{eqnarray}
where $D=\frac{d}{dz}.$

\subsection{The lemniscatic case} In the lemniscatic case, 
with $m=4$ and $\tau={\rm i}$, 
the corresponding one-dimensional operator has the following explicit form:
$$
L=D^4 +[a_0\wp(z) +a_1\wp(z-\omega_1)-2k(k+1)(\wp(z-\omega_2)+\wp(z-\omega_3))]D^2
$$
$$
+[b_0\wp'(z) +b_1\wp'(z-\omega_1)-2k(k+1)(\wp'(z-\omega_2)+\wp'(z-\omega_3))]D
$$
$$
+[k(k+1)(k+3)(k-2)(\wp^2(z-\omega_2)+\wp^2(z-\omega_3))
$$
$$
+k(k+1)(a_0-a_1)\wp(\omega_3)(\wp(z-\omega_2)-\wp(z-\omega_3))
$$
 \begin{equation}
 \label{lemn}+c_0\wp^2(z) +c_1\wp^2(z-\omega_1)],
 \end{equation}
where $\omega_1=(1+{\rm i})/2, \omega_2={\rm i}/2, \omega_3= 1/2$ and $k, a_0,a_1, b_0, b_1, c_0, c_1$ are arbitrary parameters. 

\section{Proofs of the Main Theorem}{\label{sec:proof}}

In this section, we give two different proofs of Theorem \ref{maint} (i).

\subsection{The first proof of Theorem \ref{maint}}\label{pf1}

For simplicity of exposition, we will work in a neighborhood 
$U$ of $0$ in $X$ (or, equivalently, in $\h$), which allows us to naturally trivialize 
the bundles $\cL_\lambda$, and regard sections of all line bundles 
as ordinary functions. 

For $v\in \h$, define an operator on the space of meromorphic functions 
of ${\bx}$ and $\lambda$ by the formula 
$$
(\cE_{v,C}F)({\bx},\lambda)=(\cD_{v,C}^{\cL_\lambda}+\sum_{s\in \cS}
\frac{2c_B(s)\alpha_s(v)}{(1-\zeta_s)\alpha_s(B(\lambda))}(s\otimes s^\vee))
F({\bx},\lambda),
$$
where $(s^\vee F)({\bx},\lambda):=F({\bx},s^{-1}\lambda)$. 

\begin{proposition}\label{pro1}
The operators $\cE_{v,C}$ commute, i.e. 
$[\cE_{v,C},\cE_{v',C}]=0$ for all $v,v'\in \h$.  
\end{proposition}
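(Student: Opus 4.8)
The plan is to expand the commutator and reduce it to the properties of elliptic Dunkl operators already available: commutativity at a fixed line bundle (Proposition~\ref{prope}(i)), equivariance (Proposition~\ref{prope}(ii)), and the local form of the $\lambda$-poles from Corollary~\ref{prop:locell}. Write $\cE_{v,C}=\cD_{v,C}^{\cL_\lambda}+R_v$, where
\[
R_v:=\sum_{s\in\cS}\frac{2c_B(s)\alpha_s(v)}{(1-\zeta_s)\,\alpha_s(B(\lambda))}\,(s\otimes s^\vee).
\]
Here $\cD_{v,C}^{\cL_\lambda}$ is first order in $\bx$ with $\lambda$-dependent coefficients, while $R_v$ is zeroth order in $\lambda$ and $G$-twisted, so the two operators act on essentially disjoint sets of variables and interact only through the group. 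Since $[\cD_{v,C}^{\cL_\lambda},\cD_{v',C}^{\cL_\lambda}]=0$ at each fixed $\lambda$ (Proposition~\ref{prope}(i)), and since a short check shows that the first-order part of $[\cE_{v,C},\cE_{v',C}]$ vanishes — by the same symmetry of $\alpha_s(v)\alpha_s(v')$ that makes the rational Dunkl operators commute to leading order — it remains to prove that the zeroth-order element
\[
Q_{v,v'}:=[\cD_{v,C}^{\cL_\lambda},R_{v'}]-[\cD_{v',C}^{\cL_\lambda},R_v]+[R_v,R_{v'}]\in\bC G\ltimes\cO
\]
is zero.

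The second step locates the singularities of $Q_{v,v'}$ and finishes with a Liouville-type argument. Writing $\hat g:=g\otimes g^\vee$ for the diagonal $G$-action on functions of $(\bx,\lambda)$, Proposition~\ref{prope}(ii) together with $\cL_\lambda^{g}=\cL_{g\lambda}$ gives, after conjugating also by $g^\vee$ to absorb the shift $\lambda\mapsto g\lambda$, the identity $\hat g\,\cD_{v,C}^{\cL_\lambda}\,\hat g^{-1}=\cD_{gv,C}^{\cL_\lambda}$; and the $G$-invariance of $c_B$ and $G$-equivariance of $B\colon\h^\vee\to\h$ give $\hat g\,R_v\,\hat g^{-1}=R_{gv}$. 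Hence $Q_{v,v'}$ is $\hat G$-equivariant in $(v,v')$, and its only poles lie along reflection hypertori in $\bx$ and the hyperplanes $\alpha_s(B(\lambda))=0$. Fix $\bx$ generic; once the residues of $Q_{v,v'}$ along the hyperplanes $\alpha_s(B(\lambda))=0$ are known to vanish, $\lambda\mapsto Q_{v,v'}$ is holomorphic on $\h^\vee$, and it decays as $\lambda\to\infty$ (every term carries a factor $R_{\bullet}$, which decays, while $\cD_{v,C}^{\cL_\lambda}$ is $\Gamma^\vee$-periodic in $\lambda$, hence bounded), so $Q_{v,v'}\equiv0$ and we are done. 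It therefore suffices to prove that all residues of $Q_{v,v'}$ along the hyperplanes $\alpha_s(B(\lambda))=0$ vanish, and by equivariance this is one computation per $G$-orbit of reflections.

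The core is this single-hyperplane residue computation. Along $\alpha_s(B(\lambda))=0$, Corollary~\ref{prop:locell} gives $\cD_{v,C}^{\cL_\lambda}=-R_v^{\bx}+(\text{regular})$, where $R_v^{\bx}$ is obtained from $R_v$ by letting each $s\otimes s^\vee$ act on $\bx$ alone; hence, modulo terms regular along the hyperplane, $Q_{v,v'}$ equals
\[
[\nabla_v,R_{v'}]-[\nabla_{v'},R_v]-[R_v^{\bx},R_{v'}]+[R_{v'}^{\bx},R_v]+[R_v,R_{v'}],
\]
where $\nabla_v=\partial_v^{\bx}$ locally. Using the elementary rules $[\partial_v^{\bx},g_\bx]=\partial_{(1-g)v}^{\bx}g_\bx$ and $\hat g\cdot\alpha_t(B(\lambda))^{-1}=\alpha_{gtg^{-1}}(B(\lambda))^{-1}$ (up to the obvious scalars, absorbed by $c_B$), together with the exact value $-a_B(s_T)/(1-e^{2\pi{\rm i}j/m_T})$ of the principal $\lambda$-part of $\cD_{v,C}^{\cL_\lambda}$ from Proposition~\ref{asy}, the residue of this expression collapses to the partial-fraction identity that proves the commutativity of the rational Dunkl operators, and so vanishes.

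The step I expect to be hardest is exactly this last matching. The subtle point is that $\nabla_v$ acts in $\bx$ and therefore does not ``see'' the $\lambda$-pole of $R_{v'}$, so $[\nabla_v,R_{v'}]$ produces an a priori unmatched $\lambda$-pole; it must be cancelled by the $\lambda$-pole of $[A_v,R_{v'}]$, with $A_v:=\cD_{v,C}^{\cL_\lambda}-\nabla_v$, which by Corollary~\ref{prop:locell} is the $\lambda$-pole of $-[R_v^{\bx},R_{v'}]$. Getting the scalars to agree forces one to reconcile the normalizations of $a_B$, of $c_B$, and of the section $f_{T,j}^{\cL_\lambda}$, and this is the one genuinely computational point in the argument rather than a formal manipulation.
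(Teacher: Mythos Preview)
Your approach is a direct computational verification; the paper's proof is quite different and much shorter. The key observation there is that Proposition~\ref{prope} says exactly that the assignment
\[
p_v\longmapsto \cD_{v,C}^{\cL_\bullet},\qquad \psi\in\h^*\longmapsto \psi(B(\lambda)),\qquad g\in G\longmapsto g\otimes g^\vee
\]
extends to an algebra homomorphism $\rho\colon\bC G\ltimes S(\h\oplus\h^*)\to\mathrm{End}(\text{functions of }(\bx,\lambda))$: the $\cD_v$'s commute with one another (part~(i)) and trivially with functions of $\lambda$, while conjugation by $g\otimes g^\vee$ sends $\cD_v$ to $\cD_{gv}$ (part~(ii) together with $\cL_\lambda^g=\cL_{g\lambda}$) and $\psi(B(\lambda))$ to $(g\psi)(B(\lambda))$. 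By construction $\cE_{v,C}=\rho(\cbD^0_{v,c_B})$, so commutativity of the $\cE_{v,C}$ is inherited from commutativity of the classical Dunkl operators in the source algebra. No analysis in $\lambda$ is needed, and the specific normalization of $c_B$ is irrelevant here (it only matters later, in Proposition~\ref{pro3}).

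Your route can be completed, but as written the Liouville step has a gap. You assert that the only $\lambda$-poles of $Q_{v,v'}$ lie on the hyperplanes $\alpha_s(B(\lambda))=0$; however the coefficients $f_{T,j}^{\cL_\lambda}$ inside $\cD_v^{\cL_\lambda}$ are singular at every $\lambda$ with $\cL_\lambda^{s_T^j}\cong\cL_\lambda$, i.e.\ on all $\Gamma^\vee$-translates of those hyperplanes, and these enter $Q_{v,v'}$ through the zeroth-order piece $[A_v,R_{v'}]-[A_{v'},R_v]$ (with $A_v=\cD_v-\nabla_v$). The fix is to push your own symmetry observation one step further: the equivariance $\hat g\,\cD_v\,\hat g^{-1}=\cD_{gv}$ that you already recorded gives $[\cD_v,\,s\otimes s^\vee]=\cD_{v-sv}\,(s\otimes s^\vee)$, and since $\cD_w$ is linear in $w$ while $v-sv$ is proportional to $\alpha_s(v)$, the \emph{entire} expression $[\cD_v,R_{v'}]-[\cD_{v'},R_v]$ vanishes, not only its first-order part. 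What remains is $Q_{v,v'}=[R_v,R_{v'}]$, a purely rational-in-$\lambda$ element of $\bC G$, and its vanishing is precisely the identity underlying the commutativity of the classical Dunkl operators. At that point neither the Liouville argument nor the delicate residue/normalization matching you anticipated is needed; in effect, the paper's homomorphism packages this cancellation in one line.
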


\begin{proof}
By Proposition \ref{prope}, 
the operators $\cD_{v,C}^{\bullet}, v\in \h$, linear functions $\psi(B(\lambda))$, $\psi\in \h^*$, and the operators $s\otimes s^\vee$ satisfy 
the defining relations of the algebra $\bC  G\ltimes S(\h\oplus \h^*)$. 
This implies the desired statement, since the operators $\cE_{v,C}$ are exactly the classical Dunkl operators $\cbD^0_{\bullet,c_B}$
on these generators. 
\end{proof}

Set $\widetilde{L}_i^C:=P_i(\cE_{\bullet,C})$ (these operators make sense 
and are pairwise commuting by Proposition \ref{pro1}). 

\begin{proposition}\label{pro2}
One has $\widetilde{L}_i^C=\overline{L}_i^{C,\lambda}$. 
\end{proposition}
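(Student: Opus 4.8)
The plan is to show that the two families of operators agree by comparing them generator-by-generator and then invoking functoriality of the polynomials $P_i$. Concretely, $\overline{L}_i^{C,\lambda}=P_i^{c_B}(\cD^{\cL_\lambda}_{\bullet,C},B(\lambda))$, where $P_i^{c_B}$ is the classical rational Calogero-Moser Hamiltonian; by Lemma \ref{lem:CCM}, $P_i^{c_B}=P_i(\cbD^0_{\bullet,c_B})$, i.e. it is literally the polynomial $P_i$ applied to the classical Dunkl operators, with no application of $\bm$ needed. The classical Dunkl operator $\cbD^0_{v,c_B}$ is built out of the momenta $p_v$, the reflections $s\in\cS$, and the rational functions $\alpha_s$; when we substitute position variables $\bq\mapsto B(\lambda)$, momenta $p_v\mapsto \cD^{\cL_\lambda}_{v,C}$, and reflections $s\mapsto s$, we get exactly $\overline{L}_i^{C,\lambda}$. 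On the other hand, $\widetilde{L}_i^C=P_i(\cE_{\bullet,C})$, and by the proof of Proposition \ref{pro1} the assignment $v\mapsto \cE_{v,C}$ together with $\psi\mapsto \psi(B(\lambda))$ and $s\mapsto s\otimes s^\vee$ realizes the classical rational Cherednik algebra relations, so $\widetilde{L}_i^C$ is $P_i$ applied to precisely these substituted classical Dunkl operators.

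So the core of the proof is a direct identification: I would write out the operator $P_i(\cbD^0_{\bullet,c_B})$ as a noncommutative polynomial in the generators $p_v$, $s$, $\alpha_s^{-1}$ of $\bC G\ltimes \cO(T^*\h_{\reg})$ (it has no $\bm$ to worry about, by Lemma \ref{lem:CCM}), and then apply the substitution homomorphism that sends $p_v\mapsto \cD^{\cL_\lambda}_{v,C}$, $\psi(\bq)\mapsto \psi(B(\lambda))$ (in particular $\alpha_s(\bq)\mapsto\alpha_s(B(\lambda))$), and $s\mapsto s$ as an operator acting only on $\bx$ (not on $\lambda$). This is $\overline{L}_i^{C,\lambda}$ by definition. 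For $\widetilde{L}_i^C$, the relevant substitution is $p_v\mapsto \cE_{v,C}$, $s\mapsto s\otimes s^\vee$, $\psi(\bq)\mapsto \psi(B(\lambda))$. The key point is that these two substitutions give the same operator once we unwind the definition $\cE_{v,C}=\cD^{\cL_\lambda}_{v,C}+\sum_s \frac{2c_B(s)\alpha_s(v)}{(1-\zeta_s)\alpha_s(B(\lambda))}\,(s\otimes s^\vee)$: namely, $\cE_{v,C}$ is itself the image of the classical \emph{rational} Dunkl operator $\cbD^0_{v,c_B}$ under the representation on functions of $(\bx,\lambda)$ in which $p_v\mapsto \cD^{\cL_\lambda}_{v,C}$, the coordinate $\alpha_s$ acts by multiplication by $\alpha_s(B(\lambda))$, and $s$ acts by $s\otimes s^\vee$. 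Thus $\widetilde{L}_i^C=P_i(\cbD^0_{\bullet,c_B})$ evaluated in this representation is the same noncommutative-polynomial expression as $\overline{L}_i^{C,\lambda}$, because the latter is $P_i(\cbD^0_{\bullet,c_B})$ evaluated with the \emph{same} rule on each generator — the only difference being whether one bookkeeps the $s$-action as $s$ or as $s\otimes s^\vee$, and these agree since in $\overline{L}_i^{C,\lambda}$ the reflections act only on $\bx$ while $\lambda$ is frozen as a parameter, which is exactly what $s\otimes s^\vee$ does when restricted to the $\lambda$-fiber we started from (the $s^\vee$ component is absorbed because $B(\lambda)$ transforms correctly under $G$).

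I expect the main subtlety to be keeping the bookkeeping of the $G$-action honest: one must check that evaluating the abstract polynomial $P_i(\cbD^0_{\bullet,c_B})\in H_{0,c_B}(G,\h)$ in the ``dynamical'' representation (where $s$ acts as $s\otimes s^\vee$ and position variables act via $B(\lambda)$) really coincides with the naive substitution defining $\overline{L}_i^{C,\lambda}$, where $s$ acts only on $\bx$. The resolution is that by Lemma \ref{lem:CCM} the element $P_i(\cbD^0_{\bullet,c_B})$ lies in $\cO(T^*\h_{\reg})$ — it contains \emph{no nontrivial group elements} — so the distinction between ``$s$ acting on $\bx$'' and ``$s$ acting as $s\otimes s^\vee$'' never arises in the final expression: all the $s$'s have already cancelled. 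Hence both $\overline{L}_i^{C,\lambda}$ and $\widetilde{L}_i^C$ equal the value of the single group-element-free expression $P_i^{c_B}$ with $\bp\mapsto\cD^{\cL_\lambda}_{\bullet,C}$ and $\bq\mapsto B(\lambda)$, and they coincide. I would therefore structure the writeup as: (1) invoke Lemma \ref{lem:CCM} to replace $P_i^{c_B}(\bp,\bq)$ by the group-element-free polynomial $P_i(\cbD^0_{\bullet,c_B})$; (2) observe $\cE_{v,C}$ is the image of $\cbD^0_{v,c_B}$ under the dynamical representation; (3) apply $P_i$ and use step (1) to drop all $s\otimes s^\vee$ factors, leaving exactly the substitution that defines $\overline{L}_i^{C,\lambda}$.
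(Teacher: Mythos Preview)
Your proposal is correct and follows essentially the same approach as the paper: invoke Lemma~\ref{lem:CCM} to write $P_i^{c_B}(\bp,\bq)=P_i(\cbD^0_{\bullet,c_B})$ as a group-element-free expression in $\bp,\bq$, then observe that the substitution $\bp\mapsto\cD^{\cL_\lambda}_{\bullet,C}$, $\bq\mapsto B(\lambda)$ (legitimate by Proposition~\ref{prope}) simultaneously yields $\overline{L}_i^{C,\lambda}$ and, via the representation from Proposition~\ref{pro1}, $\widetilde{L}_i^C$. Your middle paragraph's claim that $s\otimes s^\vee$ ``restricted to the $\lambda$-fiber'' agrees with $s$ acting on $\bx$ alone is not quite right (the $s^\vee$ factor genuinely moves $\lambda$), but you correctly identify and resolve this in the final paragraph: since $P_i(\cbD^0_{\bullet,c_B})\in\cO(T^*\h_{\reg})$ contains no group elements, the discrepancy never enters the final expression.
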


\begin{proof}
By Lemma \ref{lem:CCM}, $P_i(\cbD_{\bullet,c}^0)=P_i^c(\bp,\bq)$. 
Substituting $\cD_{\bullet,C}^{\cL_\lambda}$ instead of $\bp$
(which we can do by Proposition \ref{prope}) and replacing $\bold q$ by 
$B(\lambda)$, we get the desired equality. 
\end{proof}

\begin{corollary}\label{cor1}
The operators $\widetilde{L}_i^C$ are linear over functions of $\lambda$. 
\end{corollary}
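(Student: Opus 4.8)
The plan is to deduce this directly from Proposition~\ref{pro2}, after unwinding what ``linear over functions of $\lambda$'' should mean and observing that it is genuinely a statement about the $G$-invariant combination $P_i(\cE_{\bullet,C})$, not about the individual operators $\cE_{v,C}$. Here ``linear over functions of $\lambda$'' means that $\widetilde L_i^C$ commutes with multiplication by an arbitrary meromorphic function $\phi(\lambda)$ of the variable $\lambda$ alone; equivalently, that $\widetilde L_i^C$ acts fibrewise over $\lambda$. Note first that this \emph{fails} for a single $\cE_{v,C}$: the summand $\sum_{s\in\cS}\frac{2c_B(s)\alpha_s(v)}{(1-\zeta_s)\alpha_s(B(\lambda))}(s\otimes s^\vee)$ contains the operators $s^\vee$, which act on $\lambda$ by $\lambda\mapsto s^{-1}\lambda$ and hence do not commute with multiplication by $\phi(\lambda)$. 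So the content of the corollary is exactly that this dynamical shift drops out of $P_i(\cE_{\bullet,C})$, and Proposition~\ref{pro2} is precisely the identity that exhibits this.

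Concretely, by Proposition~\ref{pro2} we have $\widetilde L_i^C=\overline L_i^{C,\lambda}=P_i^{c_B}(\cD^{\cL_\lambda}_{\bullet,C},B(\lambda))$. Recall from Subsection~\ref{ccmh} (via Lemma~\ref{lem:CCM}) that $P_i^{c}(\bp,\bq)$ is a polynomial in the momenta $\bp$ whose coefficients are rational functions of $\bq$ regular on $\h_{\reg}$, and in particular involves no nontrivial element of $G$. Hence $\overline L_i^{C,\lambda}$ is assembled from operators of just two types: the elliptic Dunkl operators $\cD^{\cL_\lambda}_{v,C}$, $v\in\h$ (substituted for the $\bp$'s), and multiplication by the meromorphic functions of $\lambda$ obtained from the $\bq$-coefficients of $P_i^{c_B}$ under the substitution $\bq\mapsto B(\lambda)$.

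I would then check that each of these two types of operators commutes with multiplication by $\phi(\lambda)$. For the multiplication operators coming from the $\bq$-coefficients this is immediate, two multiplication operators always commuting. For the elliptic Dunkl operators, write $\cD^{\cL_\lambda}_{v,C}=\nabla_v+\sum_{(T,j)\in\cA}C(T,j)(f_{T,j}^{\cL_\lambda},v)\,s_T^j$: the connection $\nabla_v$ differentiates only in the $\bx$-variable, the automorphisms $s_T^j$ permute only the $\bx$-variable, and the coefficients $(f_{T,j}^{\cL_\lambda},v)$ act by multiplication by meromorphic functions of $\bx$ and $\lambda$; since none of these ingredients involves $\partial/\partial\lambda$ or the action of $G$ on $\lambda$, each commutes with multiplication by $\phi(\lambda)$. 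In particular the substituted momenta and the substituted positions commute with each other, so $\overline L_i^{C,\lambda}$ is unambiguously a sum of compositions of operators commuting with multiplication by $\phi(\lambda)$, and therefore commutes with it as well, which is the assertion.

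I do not expect any real obstacle here: once Proposition~\ref{pro2} is in hand the argument is a line of bookkeeping. The only point that deserves emphasis --- and the reason the corollary is isolated --- is that $\lambda$-linearity must fail before passing to $G$-invariants, so one cannot prove it operator by operator; one has to use that $P_i(\cbD^0_{\bullet,c})$ carries no group elements (Lemma~\ref{lem:CCM}), which is what makes the shifts $s^\vee$ cancel inside $P_i(\cE_{\bullet,C})$.
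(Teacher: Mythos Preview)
Your proof is correct and is precisely the paper's approach: the paper simply says ``Follows immediately from Proposition~\ref{pro2},'' and you have unpacked what that immediacy consists of, namely that $\overline{L}_i^{C,\lambda}$ is built from elliptic Dunkl operators and multiplications by functions of $\lambda$, neither of which shifts or differentiates in $\lambda$. Your emphasis that $\lambda$-linearity fails for the individual $\cE_{v,C}$ (because of the $s^\vee$ pieces) and only appears after Lemma~\ref{lem:CCM} eliminates the group elements is exactly the point.
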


\begin{proof} Follows immediately from Proposition \ref{pro2}.
\end{proof}

\begin{proposition}\label{pro3}
The operators $\cE_{v,C}$ map the space of functions which are regular 
in $\lambda$ near $\lambda=0$ to itself. 
\end{proposition}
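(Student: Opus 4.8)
The plan is to split $\cE_{v,C}$ into its singular and regular parts at $\lambda=0$ and to check that the only source of singularity, a factor $\alpha_s(B(\lambda))^{-1}$, is exactly cancelled by a zero produced by the correction term $(s\otimes s^\vee)$.

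First I would invoke Corollary \ref{prop:locell} to write
$$
\cD_{v,C}^{\cL_\lambda}=-\sum_{s\in\cS}\frac{2c_B(s)\alpha_s(v)}{(1-\zeta_s)\alpha_s(B(\lambda))}\,s+R_v,
$$
where, by Proposition \ref{asy} (regularity of the sections $\widetilde\varphi_{T,j}$ at $\lambda=0$), $R_v$ is a differential operator in $\bx$ with coefficients holomorphic in $\lambda$ near $0$; such an $R_v$ obviously sends functions regular in $\lambda$ near $0$ to functions of the same type. Hence
$$
\cE_{v,C}=R_v+\sum_{s\in\cS}\frac{2c_B(s)\alpha_s(v)}{(1-\zeta_s)\alpha_s(B(\lambda))}\,\bigl((s\otimes s^\vee)-s\bigr),
$$
so it suffices to treat one summand of the last sum. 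Here $(s\otimes s^\vee)-s=s\otimes(s^\vee-1)$, and since $s$ acts on the $\bx$-variable while $s^\vee$ acts on $\lambda$, for $F$ regular in $\lambda$ near $0$ one has
$$
\bigl((s\otimes s^\vee)-s\bigr)F(\bx,\lambda)=(sF)(\bx,s^{-1}\lambda)-(sF)(\bx,\lambda),
$$
with $sF$ denoting the action on the $\bx$-variable alone.

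For fixed generic $\bx$ this is holomorphic in $\lambda$ near $0$ and vanishes whenever $s^{-1}\lambda=\lambda$, i.e. on the hyperplane $\{\lambda\in\h^\vee:\ s\lambda=\lambda\}$. The point is that, because $B$ is $G$-invariant, the identification $\lambda\mapsto B(\lambda)$ intertwines the $s$-actions on $\h^\vee$ and on $\h$, so this hyperplane is precisely the zero locus of the linear function $\lambda\mapsto\alpha_s(B(\lambda))$ (this is the parenthetical equivalence ``$B(\lambda,\alpha_T)=0$ iff $s_T\lambda=\lambda$'' used in Proposition \ref{asy}). By the division lemma for holomorphic functions, $(sF)(\bx,s^{-1}\lambda)-(sF)(\bx,\lambda)$ is therefore divisible by $\alpha_s(B(\lambda))$ with quotient holomorphic in $\lambda$ near $0$; since $\alpha_s(B(\lambda))^{-1}$ has only a simple pole there, each summand
$$
\frac{2c_B(s)\alpha_s(v)}{1-\zeta_s}\cdot\frac{(sF)(\bx,s^{-1}\lambda)-(sF)(\bx,\lambda)}{\alpha_s(B(\lambda))}
$$
is regular in $\lambda$ near $0$. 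Summing over the finite set $\cS$ and adding $R_vF$ gives that $\cE_{v,C}F$ is regular in $\lambda$ near $0$, as claimed.

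The only genuine content — and the one point requiring care — is this matching of the pole divisor $\{\alpha_s(B(\lambda))=0\}$ of the singular coefficient with the fixed hyperplane of $s^\vee$ on the $\lambda$-space, which is exactly why the term $(s\otimes s^\vee)$ was built into the definition of $\cE_{v,C}$. The remaining issues — that applying $s$ to the $\bx$-variable does not disturb holomorphy in $\lambda$, the (irrelevant) line-bundle cocycle in the action of $s$, and the finiteness of $\cS$ — are routine bookkeeping, so I expect no serious obstacle.
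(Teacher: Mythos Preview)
Your proof is correct and follows essentially the same route as the paper: both invoke Corollary~\ref{prop:locell} to write $\cE_{v,C}$ as a regular part plus $\sum_{s}\frac{2c_B(s)\alpha_s(v)}{(1-\zeta_s)\alpha_s(B(\lambda))}\,s\otimes(s^\vee-1)$, and then observe that $\frac{1}{\alpha_s(B(\lambda))}(s^\vee-1)$ preserves regularity in $\lambda$. You simply spell out the latter point (matching the fixed hyperplane of $s^\vee$ with the zero locus of $\alpha_s(B(\lambda))$ via $G$-invariance of $B$, then dividing) where the paper asserts it in one line.
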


\begin{proof}
By Corollary \ref{prop:locell}, 
near $\lambda=0$, the operator
$\cE_{v,C}$ has the form
$$
\sum_{s\in \cS}\frac{2c_B(s)\alpha_s(v)}{(1-\zeta_s)\alpha_s(B(\lambda))}s\otimes (s^\vee-1)+\text{\rm regular terms }.
$$
Since the operator $\frac{1}{\alpha_s(B(\lambda))}(s^\vee-1)$ preserves regularity in $\lambda$, 
the statement follows. 
\end{proof}

By Proposition \ref{pro3}, 
the operators $\widetilde{L}_i^C$ preserve the space of functions 
which are regular in $\lambda$ near $\lambda=0$. 
By Corollary \ref{cor1}, this means that $\widetilde{L}_i^C$
are themselves regular in $\lambda$ for $\lambda$ near $0$. 
Hence, by Proposition \ref{pro2}, the 
operators $\overline{L}_i^{C,\lambda}$ are regular in $\lambda$ near $\lambda=0$, 
as desired. 

\subsection{Relation to Cherednik's proof}

In this subsection we would like to explain the connection between
the construction of Subsection \ref{pf1} and Cherednik's proof of 
the integrability of elliptic Calogero-Moser systems attached to Weyl groups
(\cite{Ch2}). 

Recall that to obtain the operators $\cE_{v,C}$ used in Subsection \ref{pf1} 
from the elliptic Dunkl operators $\cD_{v,C}^{\cL_\lambda}$, we
``subtract'' the pole in $\lambda$ by adding the 
reflection part of the rational Dunkl operator 
with respect to $\lambda$. 

As we mentioned in Remark \ref{elli}, 
in the real reflection group case, instead of the rational Dunkl operator
we could have used the trigonometric one. 
Let us denote the corresponding operators by $\cE_{v,C}^{\rm trig}$. 

For $\lambda\in \Hom(P^\vee,\bC ^*)=\h^\vee/Q$, denote by ${\mathcal F}_\lambda$ the space of meromorphic functions on $\h$ 
which are periodic under $P^\vee$ and transform by a character under $\tau P^\vee$, representing  
sections of $\cL_\lambda$. Let ${\mathcal F}=\oplus_{\lambda\text{ regular }}{\mathcal F}_\lambda$. 
It is easy to check that the operator $\cE_{v,C}^{\rm trig}$ acts naturally on ${\mathcal F}$. 

On the other hand, in \cite{Ch2}, Cherednik defined affine Dunkl operators, $\cD_{v,C}^{\rm aff}$
(\cite{Ch2}, formula (3.4) after specialization of the central element). These are differential-difference operators on functions on $\h/P^\vee$
(involving shifts by elements of $\tau P^\vee$ composed with reflections), which preserve 
the space ${\mathcal F}$. 

It turns out that the operators $\cE_{v,C}^{\rm trig}$ and $\cD_{v,C}^{\rm aff}$
on the space ${\mathcal F}$ coincide. This shows that in the real reflection group case,
the construction of Subsection \ref{pf1} is, essentially, a modification of 
the construction of \cite{Ch2}. 

\subsection{The second proof of Theorem \ref{maint}}\label{pf2}

\begin{proposition}\label{pro4}
Theorem \ref{maint}(i) holds in rank 1, i.e., if 
$\dim X=1$. 
\end{proposition}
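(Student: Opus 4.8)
The plan is to reduce the rank-1 case to a direct computation on an elliptic curve. When $\dim X=1$, the group $G$ is cyclic of order $m\in\{2,3,4,6\}$, acting on a single elliptic curve $E=X$ with the unique fixed point $0$, together with the other fixed points of order $m_T$. The polynomials $P_i$ reduce to a single generator $P=z^m$ (the $m$-th power on the one-dimensional $\h$), so $\overline{L}^{C,\lambda}=P(\cD^{\cL_\lambda}_{v,C},B(\lambda))=(\cD^{\cL_\lambda}_{v,C})^m$ up to the substitution of $B(\lambda)$ into the (here trivial) dependence on the position variable; more precisely, by Lemma \ref{lem:CCM} applied to the rank-1 classical Cherednik algebra, $P^{c_B}(\bp,\bq)$ is an explicit polynomial in $\bp$ with coefficients that are rational functions of $\bq$, and one substitutes the elliptic Dunkl operator for $\bp$ and $B(\lambda)$ for $\bq$. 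So I first write down $P^{c_B}$ explicitly (this is the rank-1 rational Calogero-Moser Hamiltonian, e.g. for $m=2$ it is $\bp^2 - c_B(c_B+1)/\bq^2$ type, and analogous closed forms for $m=3,4,6$).

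Next I would combine this with the local expansion of the elliptic Dunkl operator near $\lambda=0$ provided by Corollary \ref{prop:locell}: $\cD^{\cL_\lambda}_{v,C}=-\frac{2c_B(s)\alpha_s(v)}{(1-\zeta_s)\alpha_s(B(\lambda))}s+R(\lambda)$, where $R(\lambda)$ is regular. The key structural point is that the leading singular term has precisely the same shape as the reflection part of the rational classical Dunkl operator $\cbD^0_{v,c_B}$ evaluated at position $B(\lambda)$ — this is exactly what the definition of $c_B$ was arranged to achieve. Therefore, when I plug $\cD^{\cL_\lambda}_{v,C}$ into the polynomial $P^{c_B}$, the negative powers of $\alpha_s(B(\lambda))$ coming from the explicit coefficients of $P^{c_B}$ are designed to cancel against the negative powers of $\alpha_s(B(\lambda))$ produced by the singular part of the Dunkl operator. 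Concretely, I would expand $(\cD^{\cL_\lambda}_{v,C})$-dependence in $P^{c_B}$, collect all terms of each negative order in $\alpha_s(B(\lambda))$, and check that the sum of the Dunkl-operator contribution and the explicit-potential contribution vanishes order by order. For $m=2$ this is a one-line check; for $m=3,4,6$ it is a finite, if slightly tedious, matching of coefficients using the known form of $P^{c_B}$.

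The cleanest way to organize this, however, is not a brute-force expansion but to invoke the machinery of Subsection \ref{pf1} specialized to rank 1: the operator $\cE_{v,C}$ of Proposition \ref{pro3} is, near $\lambda=0$, equal to $\frac{2c_B(s)\alpha_s(v)}{(1-\zeta_s)\alpha_s(B(\lambda))}s\otimes(s^\vee-1)+(\text{regular})$, and since $\frac{1}{\alpha_s(B(\lambda))}(s^\vee-1)$ preserves regularity in $\lambda$, the operator $\cE_{v,C}$ preserves regularity; then $\widetilde L^C=P(\cE_{v,C})$ does too, and $\widetilde L^C=\overline{L}^{C,\lambda}$ by Proposition \ref{pro2}. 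In other words, the first proof already handles rank 1 as a special case, and this Proposition is really a sanity check / base case for the second proof. I expect the main obstacle to be purely bookkeeping: verifying that the explicit rank-1 potential in $P^{c_B}$ has exactly the coefficient dictated by $c_B$ so that the cancellation is exact rather than merely leading-order, and handling the several fixed points of $G$ on $E$ (not just $0$) correctly — one must sum $C(T,j(s))$ over all components $T\subset X^s$ in the definition of $c_B$, and the regular remainder $R(\lambda)$ will carry Weierstrass $\wp$-type contributions from those other fixed tori, which, being regular at $\lambda=0$, do not affect the argument but must be tracked to confirm regularity of the limit $\overline L^C$.
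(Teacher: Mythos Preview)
Your fallback — invoking the machinery of Subsection~\ref{pf1} — is logically correct: Propositions~\ref{pro1}--\ref{pro3} do already establish regularity in any rank, so in particular in rank~1. But Proposition~\ref{pro4} is the base case for the \emph{second} proof, which is presented as an independent alternative to the first; proving it by appealing to the first proof makes the second proof circular. So while your argument is valid, it does not serve the purpose the proposition has in the paper.

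Your brute-force sketch, on the other hand, is too vague to constitute a proof. The singular part of $\cD^{\cL_\lambda}_{v,C}$ is not a scalar $1/\lambda$ but an element $\frac{1}{\lambda}\sum_i b_i e_i$ of the group algebra (with $\sum b_i=0$), and when one substitutes it into $P^{c_B}$ the group elements interact with the position-dependent coefficients via cross-relations like $e_i x = x e_{i-1}$. The cancellation you describe is not a simple matching of scalar coefficients order by order; it happens inside $\bC G\ltimes D(X_{\reg})$ and requires tracking how idempotents shift. Your sketch does not identify this mechanism.

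The paper's own argument is a direct computation organized quite differently from either of your suggestions. It uses the factorization
\[
P^{c_B}(p,q)=\prod_{i=0}^{m-1}\Bigl(p-\frac{b_i}{q}\Bigr),
\]
sets $\Phi_i:=\cD^{\cL_\lambda}_{v,C}-b_i/\lambda$, and proves by induction on $s$ (Lemma~\ref{lem:reg}) that each product $\Phi_{r+1}\cdots\Phi_{r+s}\,e_{r+s}$ is regular at $\lambda=0$, exploiting the commutation rules between the idempotents $e_i$, the coordinate $x$, and the regular remainder terms $R_j$. Summing over the idempotents then gives regularity of $\overline{L}^{C,\lambda}=\sum_j \Phi_{j-m+1}\cdots\Phi_j e_j$. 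This is the organizing structure your brute-force plan is missing: the factorization into linear pieces $\Phi_i$ together with the idempotent bookkeeping is what makes the inductive argument go through uniformly in $m$.
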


\begin{proof}
In the rank 1 case, $G=\bZ/m\bZ$. 
Let $\mathbb C$ be the reflection representation of $G$
with coordinate function $x$.
Let $g$ be the generator of $G$ acting on $\Bbb C$ by 
multiplication by $\xi=e^{2\pi {\rm i}/m}$ (i.e., $gx=\xi^{-1}x$).
The primitive idempotents of $\Bbb CG$ are defined by
$$
e_{i}=\frac{1}{m}\sum_{j=0}^{m-1}\xi^{ij}g^{j},\ i=0,...,m-1;
$$
they satisfy the relations
$$
e_{i}e_{j}=\delta_{ij}e_{i}, \quad \sum_{i=0}^{m-1}e_{i}=1.
$$

We also have the following cross relations (the indexing is modulo $m$):
$$
e_{i}x=xe_{i-1}, e_{i}\partial_{x}=\partial_{x}e_{i+1},
e_{i}p=pe_{i+1} (p \text{ is the symbol of }\partial_{x}).
$$

For brevity, we will abuse notation and write $\lambda$ instead of $B(\lambda)$. 
From Corollary \ref{prop:locell}, we know that near $\lambda=0$
the elliptic Dunkl operator can be written as
$$
\cD_{v,C}^{\cL_\lambda}=\partial_{x}
+\frac{1}{\lambda}\sum_{i=0}^{m-1} b_{i}e_{i}+
\sum_{j=0}^{m-1} R_{j}\sum_{i=0}^{m-1} b_{i}e_{i},
$$
where $\sum b_{i} = 0$, $\mathbf{b} = (b_{0},\ldots,b_{m-1})$ 
is related to $c_{B}$ by a certain invertible linear transformation, and $R_{j}$ has the form
$$
R_{j}=\mathop{\mathop{\sum_{s\ge -1,t \geq 0}}
_{s\equiv j \,\mathrm{mod}\, m}}_{s+t\equiv-1 \,\mathrm{mod}\, m} 
a_{st}x^{s}\lambda^{t},\quad \text{ where }a_{st} \text{ are constants}.
$$
So we have $R_{j}e_{i}=e_{i+j}R_{j}$. Here all indices are modulo $m$.

We have $P_1=P=p^m$, and 
\begin{equation}\label{rank1}
P^{c_B}(p,q)=(p-\frac{b_0}{q})\cdots(p-\frac{b_{m-1}}{q}).
\end{equation} 

Define $\Phi_{i}(p,q)=p-\frac{b_i}{q}$
and $\Phi_{i}=\Phi_{i}(\cD_{v,C}^{\cL_\lambda}, \lambda)$.

\begin{lemma}{\label{lem:reg}}
For any integer $r,s$ with $1\leq s\leq m$, the expression
$$
\Phi_{r+1}\cdots \Phi_{r+s}e_{r+s}
$$ 
is regular in $\lambda$ at $\lambda=0$.
\end{lemma}
\begin{proof}
We prove the statement by induction on $s$.
By a direct computation, one can see 
that the statement is true when $s=1$.
Now for the induction step suppose the statement is true for $s<k$, where $k\ge 2$, 
and let us prove it holds for $s=k$.
We have
$$\Phi_{r+1}\cdots \Phi_{r+k}e_{r+k}
$$
$$
=\Phi_{r+1}\cdots \Phi_{r+k-1}e_{r+k-1}(\partial_{x}+b_{r+k}R_{m-1})
+b_{r+k}\sum_{j=2}^{m}\Phi_{r+1}\cdots \Phi_{r+k-1} e_{r+k-j}R_{m-j}.
$$

Notice that $R_{m-j}=\lambda^{j-1}R'_{m-j}$ for $j=1,...,m$, where $R'_{j}$  
is regular at $\lambda=0$, and $\Phi_{i}$ has only a simple pole in $\lambda$.
So $j=2,...,k-1$ we have
\begin{eqnarray*}
&&\Phi_{r+1}\cdots \Phi_{r+k-1} e_{r+k-j}R_{m-j}\\
&=&\Phi_{r+1}\cdots \Phi_{r+k-j}
(\Phi_{r+k-j+1}\lambda)
\cdots
(\Phi_{r+k-1}\lambda) e_{r+k-j}R'_{m-j}\\
&=&(\Phi_{r+k-j+1}\lambda)
\cdots
(\Phi_{r+k-1}\lambda)\Phi_{r+1}\cdots \Phi_{r+k-j}
 e_{r+k-j}R'_{m-j},
\end{eqnarray*}
which is regular in $\lambda$ by the induction hypothesis, 
while for $k\le j\le m$ the above expression is regular 
since $R_{m-j}$ is divisible by $\lambda^{j-1}$. 
Also, the expression 
$$
\Phi_{r+1}\cdots \Phi_{r+k-1}e_{r+k-1}(\partial_{x}+b_{r+k}R_{m-1})
$$
is regular by the induction hypothesis. The induction step is thus completed. 
\end{proof}

Now since
$$
\overline{L}_i^{C,\lambda}
=\prod_{i=0}^{m-1}\Phi_{i}=
\sum_{j=0}^{m-1}\Phi_{j-m+1}\cdots\Phi_{j}e_{j},
$$
Lemma \ref{lem:reg} implies that the operators $\overline{L}_i^{C,\lambda}$ are regular in $\lambda$ near $\lambda = 0$.
\end{proof}


Now we proceed to prove Theorem \ref{maint}(i) in rank $n>1$. 
By Hartogs' theorem, it suffices to check the regularity of 
$\overline{L}_i^C$ at a generic point of 
a reflection hyperplane $H\subset \h$. 
To this end, we will use the following 
proposition. 

Let $H$ be a reflection hyperplane in $\h$. 
Let $s\in \cS$ be a generator of $G_{H}\cong \bZ_m$.
Let $p\in \h_s$, $q\in \h_s^*$ be 
such that $(p,q)=1$, and let $p_1,\ldots,p_{n-1}$, 
$q_1,\ldots,q_{n-1}$ be bases of $\h^s$, $(\h^s)^*$.  
Also, since the 1-dimensional space $\h_s$ 
carries a $G_H$-action, we can define
the classical Calogero-Moser Hamiltonian 
$P^c(p,q)$ (given by formula \eqref{rank1}). 

Let $\bx_0$ be a generic point of $H$, 
and let $q_i^0:=q_i(\bx_0)$. (Note that $q(\bx_0)=0$.)

\begin{proposition}{\label{lem:loc}}
Near a generic point $\bx_0$ of $H$, for any $i=1,\ldots,n$, 
the function $P_i^c$ can be written as a polynomial of 
the functions $p_1,\ldots,p_{n-1}$, $pq$, and  
$P^c(p,q)$, whose coefficients are 
power series in the functions 
$q_1-q_1^0$,\ldots,$q_{n-1}-q_{n-1}^0$, $q^m$. 
\end{proposition}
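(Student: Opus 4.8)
The plan is to reduce the multi-dimensional statement to the rank-1 computation by exploiting the product decomposition $\h = \h^s \oplus \h_s$ together with Lemma \ref{lem:CCM}, which tells us that $P_i^c = P_i(\cbD^0_{\bullet,c})$ can be computed purely inside $\bC G \ltimes \cO(T^*\h_{\reg})$ without applying ${\bf m}$. First I would split the sum defining the classical Dunkl operators $\cbD^0_{v,c}$ into the terms coming from reflections $s'$ whose hyperplane $H_{s'}$ passes through the generic point $\bx_0 \in H$ — these are precisely the nontrivial elements of $G_H$, i.e.\ powers of $s$ — and all other reflections, whose coefficient functions $\alpha_{s'}(v)/\alpha_{s'}$ are regular (holomorphic, in fact) near $\bx_0$ since $\alpha_{s'}(\bx_0) \neq 0$. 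Writing $v = v' + v''$ with $v' \in \h^s$, $v'' \in \h_s$, and using that $\alpha_s \in \h_s^*$ so $\alpha_s(v') = 0$, the ``singular part'' of $\cbD^0_{v,c}$ only involves $v''$ and acts in the $\h_s$-direction; it is exactly the rank-1 classical Dunkl operator for $G_H \cong \bZ_m$ acting on $\h_s$, contributing the factor whose invariant polynomial is $P^c(p,q)$.

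Next I would organize the computation of $P_i(\cbD^0_{\bullet,c})$ by expanding $P_i$ as a polynomial in the momentum-like generators $\cbD^0_{p_1,c},\ldots,\cbD^0_{p_{n-1},c},\cbD^0_{p,c}$. The operators $\cbD^0_{p_j,c}$ ($j = 1,\ldots,n-1$) differ from $p_j$ only by terms regular near $\bx_0$ and involving group elements; commuting everything through and using Lemma \ref{lem:CCM} again (to discard group elements at the end), each such contribution can be expressed via $p_1,\ldots,p_{n-1}$ with coefficients holomorphic in $\bx_0$, hence expandable in power series in $q_1 - q_1^0,\ldots,q_{n-1}-q_{n-1}^0$. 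Since these coefficient functions live on $\h^s$ away from the deeper strata and are also $G_H$-invariant (the whole expression $P_i^c$ is $G$-invariant and we have fixed a $G_H$-stable decomposition), their dependence on the $\h_s$-direction goes through the $G_H$-invariant coordinate $q^m$ rather than $q$ itself. The remaining dependence on the $\h_s$-direction is controlled by the rank-1 block: powers of $\cbD^0_{p,c}$, restricted to an invariant polynomial, are expressible — by the rank-1 identity \eqref{rank1} and the fact that $pq$ and $P^c(p,q)$ generate the $G_H$-invariant functions of $p,q$ (with the singularity structure of $P^c$ along $q = 0$ being the only source of poles) — as polynomials in $pq$ and $P^c(p,q)$ with coefficients holomorphic in $q^m$.

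The main obstacle I anticipate is the bookkeeping of the cross-terms: when one expands a high-degree $P_i$ in the $\cbD^0$'s, one gets mixed monomials in $\cbD^0_{p,c}$ and the $\cbD^0_{p_j,c}$'s, and the group elements $s^k$ coming from the $\h_s$-block do not commute with the $p_j$-direction coefficient functions unless those functions are $s$-invariant. One must therefore argue that after summing all contributions and invoking Lemma \ref{lem:CCM} (so the final answer is genuinely a function), the non-$s$-invariant pieces cancel, leaving only combinations that descend to functions of $q^m$ in the $\h_s$-direction; equivalently, one should set up the argument so that from the start one works with the averaged (over $G_H$) coefficients. A clean way to handle this is to observe that $P_i(\cbD^0_{\bullet,c})$, being central in $H_{0,c}(G,\h)$ (as in the proof of Lemma \ref{lem:CCM}), in particular commutes with $\bC G_H$ and with $\cO(\h^s)$, which forces the stated form. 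Once this structural point is in place, the explicit power-series expansion and the reduction to $P^c(p,q)$, $pq$, $p_1,\ldots,p_{n-1}$ is routine, and one does not need to compute the coefficients explicitly — only their analytic type (holomorphic in the listed variables) matters for the application to Theorem \ref{maint}(i) via Hartogs.
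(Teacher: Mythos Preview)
Your overall strategy --- localize at $\bx_0$, split off the rank-$1$ block for $G_H$, and treat the rest as regular --- is the right picture, and it is indeed how the paper's proof is organized. But the paper does not carry out the bookkeeping you describe; instead it invokes a structural theorem: the classical version of the Bezrukavnikov--Etingof completion result (\cite{BE}, \cite{B}), which says that the completion of $B_{0,c}(G,\h)$ at $\bx_0$ is isomorphic to the completion at $0$ of $\bC[q_1,\ldots,q_{n-1},p_1,\ldots,p_{n-1}]\otimes B_{0,c}(G_H,\h_s)$. Since $P_i^c\in B_{0,c}(G,\h)$ and the rank-$1$ spherical algebra $B_{0,c}(G_H,\h_s)$ is generated by $q^m$, $pq$, and $P^c(p,q)$, the proposition follows immediately.

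Your attempt to replace this black box by a direct argument has a genuine gap at the step you flag yourself. You write that ``$pq$ and $P^c(p,q)$ generate the $G_H$-invariant functions of $p,q$'' and that centrality of $P_i(\cbD^0_{\bullet,c})$ in $H_{0,c}(G,\h)$ ``forces the stated form.'' Neither claim is correct as stated. Centrality (together with Lemma~\ref{lem:CCM}) only tells you that $P_i^c$ is a $G_H$-invariant function on $T^*\h_{\reg}$ with poles along $q=0$; but plenty of such functions, e.g.\ $p^m$ itself, are \emph{not} polynomials in $pq$ and $P^c(p,q)$ with coefficients regular in $q^m$ (indeed $p^m=P^c(p,q)+\text{terms with poles in }q$). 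What singles out $P_i^c$ is precisely that it lies in the spherical subalgebra $B_{0,c}(G,\h)$, and what is needed is that under completion at $\bx_0$ this subalgebra factors through the rank-$1$ spherical subalgebra --- which is exactly the content of the \cite{BE}/\cite{B} theorem. Your hands-on expansion of $P_i(\cbD^0_{\bullet,c})$ would, if pushed through carefully (tracking how the group elements $s'\notin G_H$ act on the completion and reorganizing the cross-terms), essentially reprove that theorem in this case; but as written, the proposal asserts the conclusion at the crucial point rather than proving it.
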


\begin{proof} Let 
\begin{equation}\label{symed}
\e=\frac{1}{|G|}\sum_{g\in G}g\in \bC  G
\end{equation}
be the symmetrizing idempotent. 
The function $P_i^c$ belongs to the spherical subalgebra $B_{0,c}(G,\h):=\e H_{0,c}(G,\h)\e$
of the rational Cherednik algebra $H_{0,c}(G,\h)$ (sitting inside $\cO(T^*\h_{\reg})^G$). 
By the classical version of Theorem 3.2 of \cite{BE} (see also \cite{B}),
the completion at $\bx_0$ of the algebra $B_{0,c}(G,\h)$ is isomorphic to 
the completion at $0$ of the algebra $\bC [q_1,\ldots,q_{n-1},p_1,\ldots,p_{n-1}]\otimes B_{0,c}(G_H,\h_s)$. 
However, the algebra $B_{0,c}(G_H,\h_s)$ is generated by $q^m,pq$ and $P^c(p,q)$. 
This implies the desired statement.  
\end{proof}

Theorem \ref{maint}(i) follows immediately from Proposition \ref{pro4} and Proposition \ref{lem:loc}.

\section{A geometric construction of quantum crystallographic elliptic Calogero-Moser systems}

In this section we will give a geometric construction of the
quantum crystallographic elliptic Calogero-Moser systems described above, in the style
of the Beilinson-Drinfeld construction of the quantum Hitchin
system, \cite{BD}. Namely, we construct these systems as 
algebras of global sections of sheaves of spherical elliptic
Cherednik algebras, for the critical value of the twisting
parameter. On the other hand, if the twisting parameter is not critical, we show
that the algebra of global sections reduces to 
$\bC $. 

\subsection{Cherednik algebras of varieties with a finite group
action}

Let us recall the basics on the Cherednik algebras of varieties
with a finite group action, introduced in \cite{E2} (see also
\cite{EM2}, Section 7). 

Let $X$ be a smooth affine algebraic variety over 
$\bC$. For a closed hypersurface $Y\subset X$,
let $\cO_X(Y)$ be the space of regular functions on
$X\setminus Y$ with a pole of at most first order on $Y$.
Let $\xi_Y: {\rm Vect}(X)\to \cO_X(Y)/\cO_X$ be the
natural map. 

Let $G$ be a finite group of automorphisms of $X$. 
Let  $\cY$ be the set of pairs $(Y,s)$,
where $s\in G$, and $Y$ is a connected component of 
the set of fixed points $X^{s}$ such that $\codim Y=1$
(called a {\it reflection hypersurface}).
Let $\lambda_{Y,s}$ be the eigenvalue of $s$ on the 
conormal bundle of $Y$. Let $X_{\reg}$ be the complement of reflection 
hypersurfaces in $X$. 

Fix $\omega\in \rH^2(X)^G$, and let $D_\omega(X)$ 
be the algebra of twisted differential operators  
on $X$ with twisting $\omega$. 

Let $c: \cY\to \bC$ be a $G$-invariant function.
Let $v$ be a vector field on $X$, 
and let $f_Y\in \cO_X(Y)$ 
be an element of the coset $\xi_Y(v)\in {\mathcal
O}_X(Y)/\cO_X$. 

A {\it Dunkl-Opdam operator} for $G,X$ is an operator 
given by the formula 
\begin{equation*}
\cD:={\bL}_v+\sum_{(Y,s)\in
\cY}f_Y\cdot \frac{2c_{Y,s}}{1-\lambda_{Y,s}}
(s-1),
\end{equation*}
where ${\bL}_v\in D_\omega(X)$ is the $\omega$-twisted Lie
derivative along $v$ (here we pick a closed $2$-form representing
$\omega$).

The {\it Cherednik algebra of $G,X$}, 
$H_{1,c,\omega}(G,X)$, is  generated  
inside $\Bbb CG\ltimes D_\omega(X_{\reg})$ by the function algebra 
$\cO_X$, the group $G$, and the Dunkl-Opdam 
operators $\cD$. 

Now let $X$ be any smooth algebraic variety (not necessarily affine), 
and let $G$ be a finite group acting on $X$.
Assume that $X$ has a $G$-invariant affine open covering,
so that $X/G$ is also a variety.
Recall that twistings of differential operators on $X$
are parametrized by $\rH^{2}(X, \Omega_{X}^{\geq 1})$; 
in particular, if $X$ is projective, they are parametrized by 
$\rH^{2,0}(X)\oplus \rH^{1,1}(X)$ (see \cite{BB}).
So for $\psi\in \rH^{2}(X, \Omega_{X}^{\geq
1})^{G}$, we can define the sheaf of Cherednik algebras
$H_{1, c,\psi, G,X}$ (a quasicoherent sheaf on $X/G$), 
by gluing the above constructions on $G$-invariant affine open 
sets. Namely, for an affine open set $U\subset X/G$, we set
$$
H_{1, c,\psi, G,X}(U):=H_{1, c,\psi}(G,\overline{U}), 
$$ 
where $\overline{U}$ is the preimage of $U$ in $X$.
We can also define the sheaf of spherical Cherednik algebras,
$B_{1, c,\psi, G,X}$, given by 
$$
B_{1, c,\psi, G,X}(U)=\e H_{1, c,\psi}(G,\overline{U})\e
$$
where $\e$ is the symmetrizing idempotent of $G$, 
defined by \eqref{symed}. 

Finally, let us define the sheaves of modified Cherednik algebras, 
$H_{1,c,\psi,\eta,G,X}$ and modified spherical Cherednik algebras
$B_{1,c,\psi,\eta,G,X}$. Let 
$\eta$ be a $G$-invariant function on the set of reflection
hypersurfaces in $X$. Define a modified Dunkl-Opdam operator
for $G,X$ (when $X$ is affine) by the formula
\begin{equation*}
\cD:={\bL}_v+\sum_{(Y,s)\in
\cY}\frac{2c_{Y,s}}{1-\lambda_{Y,s}}
f_Y\cdot (s-1)+\sum_{Y}\eta(Y)f_Y,
\end{equation*}
(where the summation in the second sum is over all reflection 
hypersurfaces), and define the sheaf of algebras 
$H_{1,c,\psi,\eta,G,X}$ to be locally generated 
by $\cO_X$, $G$, and modified Dunkl-Opdam operators
(so, we have $H_{1,c,\psi,0,G,X}=H_{1,c,\psi,G,X}$). 
Also, set $B_{1,c,\psi,\eta,G,X}:=\e H_{1,c,\psi,\eta,G,X}\e$. 

Note that according to the PBW theorem, 
the sheaf $H_{1,c,\psi,\eta,G,X}$ has an 
increasing filtration $F^\bullet$, such that 
${\gr}(H_{1,c,\psi,\eta,G,X})=G\ltimes \cO_{T^*X}$. 

Note also that the modified Cherednik algebras can be expressed via the
usual ones (see \cite{E2}, \cite{EM2}). 
Namely, let $\psi_Y$ be the twisting of differential operators 
on $X$ by the line bundle $\cO_X(Y)^*$. 
Then one has 
$$
H_{1,c,\psi,\eta,G,X}\cong H_{1,c,\psi+\sum_Y \eta(Y)\psi_Y,G,X}.
$$

Finally, note that we have a canonical isomorphism of sheaves 
$$
H_{1, c, 0,\eta,G,X}|_{X_{\reg}}\cong 
\bC  G\ltimes D_{X_{\reg}}.
$$ 

\subsection{Elliptic Cherednik algebras and crystallographic elliptic Calogero-Moser systems}

Now let $X$ be an abelian variety, and $G$ an irreducible 
complex reflection group acting on $X$, as in Section 2. 
It is easy to see that $(\wedge^2\h^*)^G=0$, so $X$ does not 
admit nonzero global 2-forms. This implies that 
the space of $G$-invariant twistings of differential operators 
on $X$ is $\rH^{1,1}(X)^G$, which is 1-dimensional, 
and spanned by the K\"ahler form on $X$ defined by the Hermitian form
$B$. So we can make the identification $\rH^{1,1}(X)^G\cong \bC $. 

It is well known that $X$ admits a $G$-invariant affine open
covering, so $X/G$ is an algebraic variety, and we can consider the sheaves 
$H_{1, c,\psi,\eta, G,X}$ and $B_{1, c,\psi,\eta, G,X}$ on $X/G$.

Notice that we have an isomorphism $\cY\cong \cA$.
Thus we can substitute for $c$ the function 
$$
c_{T,s}=(1-e^{-2\pi {\rm i}j(s)/m_s})C(T,j(s))/2.
$$ 
Also, define a function $\eta_C$ on the set of reflection hypertori 
by the formula 
$$
\eta_C(T):=\sum_{j=1}^{m_T-1}C(T,j).
$$

The main result of this section is the following theorem, 
which gives a geometric construction of the quantum elliptic
integrable systems.  

\begin{theorem}\label{glosec}
(i) Restriction to $X_{\reg}$ defines 
an isomorphism 
$$
\Gamma(X/G,B_{1,c,0,\eta_C,G,X})\cong 
\bC [L_1^C,\ldots,L_n^C].
$$

(ii) The algebra of global sections 
$\Gamma(X/G,B_{1,c,\psi,G,X})$ 
is nontrivial (i.e. not isomorphic to $\bC $)
if and only if
\begin{equation}\label{crit}
\psi=\sum_{(T,j)\in \cA} C(T,j)\psi_T. 
\end{equation}
If \eqref{crit} holds, 
$\Gamma(X/G,B_{1,c,\psi,G,X})$ 
is a polynomial algebra in generators $L_i$ whose symbols are $P_i$.
\end{theorem}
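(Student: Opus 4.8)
The plan is to reduce (i) to the main theorem together with the standard Beilinson--Bernstein/localization description of sections of twisted differential operator sheaves on $X/G$, and to reduce (ii) to a cohomological vanishing argument controlling which twistings admit nonzero global sections at all. For part (i), first I would use the canonical isomorphism $H_{1,c,0,\eta_C,G,X}|_{X_{\reg}}\cong \bC G\ltimes D_{X_{\reg}}$ recorded at the end of Subsection 6.1, so that restriction to the regular locus embeds $\Gamma(X/G,B_{1,c,0,\eta_C,G,X})$ into $(\e (\bC G\ltimes D(X_{\reg}))\e)$, i.e.\ into $G$-invariant differential operators on $X_{\reg}$. The key point is then to identify the image. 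On one hand, the operators $\overline L_i^C$ of Theorem \ref{maint} lie in $\bC G\ltimes D(X_{\reg})$, are $G$-invariant, and (applying $\e$ on both sides, using $G$-invariance) descend to the differential operators $L_i^C$ on $X_{\reg}$; by the PBW filtration on $H_{1,c,0,\eta_C,G,X}$ — whose associated graded is $G\ltimes \cO_{T^*X}$ — together with the fact that the elliptic Dunkl operators have the ``correct'' symbols $p_v$, one sees that $\overline L_i^C$ is a global section of $H_{1,c,0,\eta_C,G,X}$, hence $L_i^C\in \Gamma(X/G,B_{1,c,0,\eta_C,G,X})$. For the reverse inclusion one argues that a global section of $B_{1,c,0,\eta_C,G,X}$ is a $G$-invariant differential operator on $X_{\reg}$ whose coefficients extend to sections of the appropriate sheaves on $X$ with controlled poles on the reflection hypertori; since $X$ is an abelian variety, invariant global functions are constants, and one shows inductively on the order of the operator (using the filtration and that $\gr B_{1,c,0,\eta_C,G,X}=(\cO_{T^*X})^G$, whose global sections are polynomials in the $P_i(\bp)$) that the section must be a polynomial in the $L_i^C$.

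For part (ii), I would first use the isomorphism $H_{1,c,\psi,\eta,G,X}\cong H_{1,c,\psi+\sum_Y\eta(Y)\psi_Y,G,X}$ from Subsection 6.1 to absorb the shift: with $\eta=\eta_C$ the twisting $0$ becomes $\sum_Y\eta_C(Y)\psi_Y=\sum_{(T,j)\in\cA}C(T,j)\psi_T$, which is exactly the critical value \eqref{crit}. So part (i) already exhibits a nontrivial algebra of global sections precisely at $\psi=\sum C(T,j)\psi_T$, and it remains to prove that for any other value of $\psi$ the global sections reduce to $\bC$. For this I would look at the zeroth filtration piece: $F^0 B_{1,c,\psi,G,X}=\cO_{X/G}$, and more generally $\gr \Gamma(X/G,B_{1,c,\psi,G,X})\hookrightarrow \Gamma(X/G,\gr B_{1,c,\psi,G,X})$. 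The leading-symbol map sends a global section of order $d$ to a $G$-invariant global function on $T^*X$ homogeneous of degree $d$ in momenta, i.e.\ an element of $(\cO_{T^*X})^G_d=((S\h)\otimes\cO_X)^G_d$; since $\cO_X=\bC$, this is $(S^d\h)^G$. The obstruction to lifting such a symbol to an actual global section is a class in $\rH^1$ of the symbol sheaf (or, more precisely, of an extension sheaf built from the twisting $\psi$ and the $\psi_T$), and the content of the statement is that this obstruction vanishes exactly when $\psi$ is critical. So the real work is a vanishing/non-vanishing computation: show that for non-critical $\psi$ the twisted line bundle governing the lift of the lowest Calogero--Moser symbol $P_1(\bp)$ has no nonzero global sections, forcing $d=0$ and hence $\Gamma=\bC$.

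The main obstacle is precisely this last cohomological computation in part (ii): pinning down exactly which sheaf on $X/G$ (or equivariant sheaf on $X$) controls the existence of a global section with prescribed symbol, and showing its space of global sections is nonzero only at the single critical twisting. This is the analogue, in the elliptic Cherednik setting, of the fact in the Beilinson--Drinfeld construction that global twisted differential operators on $\mathrm{Bun}_G$ exist only at the critical level; here it should come down to the observation that $\rH^{1,1}(X)^G$ is one-dimensional (established at the start of Subsection 6.2), so the twisting is a single complex parameter, and the line bundles $\cO_X(T)^*$ entering the $\psi_T$ together with this parameter give an affine family of line bundles on $X$ of which exactly one (the trivial-on-the-quotient one) has sections — I would make this precise by pushing forward to $X/G$ and analyzing degrees, using that the symbols $P_i$ must be among the global sections in the critical case to fix the normalization. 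Parts (i) and (iii-style) assertions about the $L_i$ being a polynomial algebra in generators with symbols $P_i$ then follow from Theorem \ref{maint}(iii) and the filtration argument above.
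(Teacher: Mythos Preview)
Your outline for part (i) and the ``if'' direction of (ii) matches the paper's proof. One sharpening: the reason $\overline L_i^C$ lies in $H_{1,c,0,\eta_C,G,X}$ is not merely that the elliptic Dunkl operators have symbol $p_v$, but that with the specific choice $\eta=\eta_C$ the modified Dunkl--Opdam generators of this sheaf take the form $\bL_v+\sum f_Y\cdot\frac{2c_{Y,s}}{1-\lambda_{Y,s}}\,s$ (the $(s-1)$ becomes $s$), which is exactly the shape of the elliptic Dunkl operators. Hence for each generic $\lambda$ the $\cD_{v,C}^{\cL_\lambda}$ are honest local sections of the sheaf, so $\overline L_i^{C,\lambda}$ is too, and passing to the limit $\lambda\to 0$ (Theorem \ref{maint}) gives the claim. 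The reverse inclusion via the associated graded, $\Gamma(X/G,\gr B)=(S\h)^G$, is exactly what the paper does.

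For the ``only if'' direction of (ii) your instinct (an obstruction in $\rH^1$) is correct, but the mechanism you sketch---an affine family of line bundles on $X$, pushing forward to $X/G$, degree counting---is off target and would not close the argument. The paper's computation is more direct and lives entirely on $X$: for each $r\ge 1$ set $\cE:=F^r H\e/F^{r-2}H\e$ with $H=H_{1,c,\psi,G,X}$; this is a vector bundle on $X$ sitting in a short exact sequence
\[
0\to S^{r-1}\h\to \cE\to S^r\h\to 0
\]
of extensions of trivial bundles. The extension class lies in $\Hom(S^r\h,S^{r-1}\h)\otimes \Ext^1(\cO_X,\cO_X)=\Hom(S^r\h,S^{r-1}\h\otimes\h)$ (using $\Ext^1(\cO_X,\cO_X)=\h$), and a direct calculation shows it equals the canonical inclusion $S^r\h\hookrightarrow S^{r-1}\h\otimes\h$ scaled by the number $\psi-\sum_{(T,j)}C(T,j)\psi_T$. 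When this scalar is nonzero the class is injective, so no nonzero section of $S^r\h$ lifts to $\cE$; hence no global section of $B_{1,c,\psi,G,X}$ of positive order exists. This is the missing ingredient in your proposal: the obstruction is an $\Ext^1$ class between trivial bundles on the abelian variety (not a question of sections of a line bundle), and its identification with the multiplication map times the ``distance to critical'' is what makes the one-dimensionality of $\rH^{1,1}(X)^G$ do its work.
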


\begin{example}
If $C=0$, Theorem \ref{glosec} states that for $\psi\in \bC $, 
there exist nontrivial $G$-invariant $\psi$-twisted global differential
operators on $X$ if and only if $\psi=0$, in which case the algebra of 
such operators is $(S\h)^G$. This is, of course, easy to check directly. 
\end{example}

\subsection{Proof of Theorem \ref{glosec}}

We first prove (i). The sheaf of algebras 
$H_{1,c,0,\eta_C,G,X}$ is locally generated by regular functions on $X$,
elements of $G$, and Dunkl-Opdam operators without a ``pure
function'' term: 
\begin{equation*}
\cD={\bL}_v+\sum_{(Y,s)\in
\cY}f_Y\cdot \frac{2c_{Y,s}}{1-\lambda_{Y,s}}
s.
\end{equation*}
This implies that for a generic $\cL$, 
the elliptic Dunkl operators  
$\cD_{v,C}^{\cL}$ are sections 
of the sheaf $H_{1,c,0,\eta_C,G,X}$ 
on the formal neighborhood of any point in $X/G$. 
Thus, the same applies to the operators 
$\overline{L}_i^{C,\lambda}$, and hence to their limits at
$\lambda=0$, $\overline{L}_i^C$ (which exist by Theorem
\ref{maint}). But since the coefficients 
of $\overline{L}_i^C$ are periodic, 
$\overline{L}_i^C$ are actually {\sl global} sections of 
the sheaf $H_{1,c,0,\eta_C,G,X}$. Thus, 
$L_i^C$ are global sections of $B_{1,c,0,\eta_C,G,X}$, i.e., 
$\bC [L_1^C,\ldots,L_n^C]\subset
\Gamma(X/G,B_{1,c,0,\eta_C,G,X})$. To see that this inclusion is 
an isomorphism, it suffices to show that it is an isomorphism 
for the corresponding graded algebras, which is obvious, 
since $\Gamma(X/G,{\rm gr}(B_{1,c,0,\eta_C,G,X}))=(S\h)^G$. 

Now we prove (ii). As explained above, we have an isomorphism 
$$
H_{1,c,0,\eta_C,G,X}\cong H_{1,c,\sum_{(T,j)\in \cA}
C(T,j)\psi_T,G,X},
$$
which proves the ``if'' part of (ii). It remains to prove the
``only if'' part, i.e. that if equation \eqref{crit} does not hold
then the algebra of global sections is trivial. 
To this end, for $r\ge 1$ consider the vector bundle 
$$
\cE:=F^r H\e/F^{r-2}H\e,
$$
on $X$, where $H=H_{1,c,\psi,G,X}$. 
We have an exact sequence of vector bundles on $X$:
$$
0\to S^{r-1}\h\to \cE\to S^r\h\to 0,
$$
where the bundles $S^k\h$ are trivial. 
Such an extension is determined by 
an extension class $\beta$ in 
\begin{eqnarray*}
\Ext^1(S^r\h,S^{r-1}\h)&=&
\Hom_{\bC }(S^r\h,S^{r-1}\h)\otimes 
\Ext^1(\cO_X,\cO_X)\\
&=&
\Hom_{\bC }(S^r\h,S^{r-1}\h\otimes \h). 
\end{eqnarray*}
(since $\Ext^1(\cO_X,\cO_X)=\h$). 
A direct calculation shows that (up to a nonzero constant) 
$\beta$ is the canonical inclusion multiplied by the number
$$
\psi-\sum_{(T,j)\in \cA} C(T,j)\psi_T. 
$$
So if \eqref{crit} does not hold, $\beta$ is injective, and thus 
no nonzero section of $S^r\h$ can be lifted to a section of
$\cE$. This implies the ``only if'' part of (i). 

\subsection{The classical analog of Theorem \ref{glosec}}

In this subsection we give a geometric construction of the classical crystallographic elliptic Calogero-Moser systems. 

Define a modified classical Dunkl-Opdam operator
for $G,X$ (when $X$ is affine) by the formula
\begin{equation*}
\cD^0:=p_v+\sum_{(Y,s)\in
\cY}\frac{2c_{Y,s}}{1-\lambda_{Y,s}}
f_Y\cdot (s-1)+\sum_Y \eta(Y)f_Y.
\end{equation*}
Let $T_\psi^*X$ denote the $\psi$-twisted cotangent 
bundle of $X$ (see \cite{BB}, Section 2), and define the sheaf of modified classical elliptic Cherednik 
algebras $H_{0,c,\psi,\eta,G,X}$ to be locally generated 
inside $\bC  G\ltimes \cO(T^*_\psi X_{\reg})$ 
by $\cO_X$, $G$, and modified classical Dunkl-Opdam operators (\cite{E2}).
The ``unmodified'' version $H_{0,c,\psi,0,G,X}$ will be shortly denoted by 
$H_{0,c,\psi,G,X}$. Also, set $B_{0,c,\psi,\eta,G,X}:=\e H_{0,c,\psi,\eta,G,X}\e$. 

\begin{theorem}\label{glosec1}
(i) Restriction to $X_{\reg}$ defines 
an isomorphism 
$$
\Gamma(X/G,B_{0,c,0,\eta_C,G,X})\cong 
\bC [L_1^{0,C},\ldots,L_n^{0,C}].
$$

(ii) The algebra of global sections 
$\Gamma(X/G,B_{0,c,\psi,G,X})$ 
is nontrivial (i.e. not isomorphic to $\bC $)
if and only if
\begin{equation}\label{crit1}
\psi=\sum_{(T,j)\in \cA} C(T,j)\psi_T. 
\end{equation}
If \eqref{crit1} holds, 
$\Gamma(X/G,B_{0,c,\psi,G,X})$ 
is a polynomial algebra in generators $L_i^{(0)}$ 
whose leading terms in momentum variables are $P_i$.
\end{theorem}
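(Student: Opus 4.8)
The plan is to prove Theorem~\ref{glosec1} in close parallel to the proof of Theorem~\ref{glosec}, substituting everywhere the classical objects for their quantum counterparts. First, for part (i), I would observe that the sheaf of modified classical Cherednik algebras $H_{0,c,0,\eta_C,G,X}$ is locally generated by regular functions on $X$, elements of $G$, and classical Dunkl--Opdam operators \emph{without a pure-function term}, i.e.\ of the form $p_v+\sum_{(Y,s)\in\cY}f_Y\cdot\frac{2c_{Y,s}}{1-\lambda_{Y,s}}s$. Exactly as in the quantum case, this means that for generic $\cL$ the classical elliptic Dunkl operators $\cD^{0,\cL}_{v,C}$ are sections of $H_{0,c,0,\eta_C,G,X}$ on the formal neighborhood of any point of $X/G$; hence so are the elements $\overline{L}_i^{0,C,\lambda}=P_i^{c_B}(\cD^{0,\cL_\lambda}_{\bullet,C},B(\lambda))$, and hence so are their limits $\overline{L}_i^{0,C}$ at $\lambda=0$, which exist by Theorem~\ref{maint1}(i). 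Periodicity of the coefficients upgrades these from formal-local to genuine global sections, so $L_i^{0,C}=\bm(\overline{L}_i^{0,C})$ give global sections of the spherical sheaf $B_{0,c,0,\eta_C,G,X}$, yielding the inclusion $\bC[L_1^{0,C},\ldots,L_n^{0,C}]\subset\Gamma(X/G,B_{0,c,0,\eta_C,G,X})$. To see it is an isomorphism I would pass to associated graded algebras with respect to the PBW filtration, where $\gr B_{0,c,\psi,\eta,G,X}=(\cO_{T^*X})^G$ just as in the quantum case, and the graded statement $\Gamma(X/G,\gr B_{0,c,0,\eta_C,G,X})=(S\h)^G$ is the obvious fact that $G$-invariant functions on $T^*X$ polynomial in momenta and regular are exactly $(S\h)^G$.

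For part (ii), the ``if'' direction follows from the classical analog of the identification $H_{0,c,0,\eta_C,G,X}\cong H_{0,c,\sum_{(T,j)\in\cA}C(T,j)\psi_T,G,X}$, which holds by the same twisting-by-$\cO_X(Y)^*$ manipulation recorded earlier for the quantum algebras; combined with part (i) this gives nontriviality and the polynomial-algebra description when \eqref{crit1} holds. For the ``only if'' direction I would run the same extension-class computation: for $r\ge1$ form the bundle $\cE:=F^rH\e/F^{r-2}H\e$ on $X$ with $H=H_{0,c,\psi,G,X}$, giving a short exact sequence $0\to S^{r-1}\h\to\cE\to S^r\h\to0$ of vector bundles with the outer terms trivial, and compute the extension class $\beta\in\Ext^1(S^r\h,S^{r-1}\h)=\Hom_{\bC}(S^r\h,S^{r-1}\h\otimes\h)$ (using $\Ext^1(\cO_X,\cO_X)=\h$). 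The computation shows $\beta$ equals, up to a nonzero scalar, the canonical inclusion times $\psi-\sum_{(T,j)\in\cA}C(T,j)\psi_T$, so if \eqref{crit1} fails then $\beta$ is injective, no nonzero section of $S^r\h$ lifts to $\cE$, and the global sections reduce to $\bC$.

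I expect the statement to require essentially no new ideas beyond the quantum proof, so the only real point to check carefully is the classical analog of the PBW/extension-class bookkeeping: namely that the filtration $F^\bullet$ on $H_{0,c,\psi,G,X}$ and the twisted cotangent bundle $T^*_\psi X$ behave so that $\cE=F^rH\e/F^{r-2}H\e$ is still a vector bundle fitting into the stated exact sequence, and that the extension class is computed by the same formula with the same coefficient $\psi-\sum C(T,j)\psi_T$. This is the main obstacle only in the sense that one must verify the twisted-cotangent-bundle geometry of \cite{BB} does not alter the extension-class calculation; in the quantum case this is where the twisting $\psi$ enters $\beta$, and one should confirm it enters identically at the classical level. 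Everything else --- regularity at $\lambda=0$ via Theorem~\ref{maint1}, the graded identification, the twisting isomorphism --- transfers verbatim, and in fact the whole theorem can alternatively be obtained from Theorem~\ref{glosec} by taking the quasiclassical limit, which I would mention as a second, shorter route.
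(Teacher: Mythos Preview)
Your proposal is correct and follows exactly the approach the paper takes: the paper's proof consists of the single sentence ``The proof is parallel to the proof of Theorem~\ref{glosec}, using Theorem~\ref{maint1},'' and your write-up is a faithful unpacking of precisely that parallel argument (including the quasiclassical-limit remark, which the paper also makes for Theorem~\ref{maint1} itself).
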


\begin{proof}
The proof is parallel to the proof of Theorem \ref{glosec}, 
using Theorem \ref{maint1}.
\end{proof}

\section{Algebraic integrability of quantum crystallographic elliptic Calogero-Moser systems}

Let $\lbrace{L_1,\ldots,L_n\rbrace}$ be a quantum integrable system (i.e., a commuting system of differential operators) 
on an open set $U\subset \bC ^n$. Assume that the symbols $P_i$ of $L_i$ have constant coefficients, 
and $\bC [p_1,\ldots,p_n]$ is a finitely generated module 
(of some rank $r$) over $\bC [P_1,\ldots,P_n]$.  
Consider the joint eigenvalue problem:
\begin{equation}\label{algint}
L_i\Psi=\Lambda_i\Psi. 
\end{equation}
Clearly, the space of local holomorphic solutions of this system 
near a generic point $\bx_0\in U$ has dimension $r$. Recall 
\cite{K2, CV1,CV2, BEG} that the system $\lbrace{L_i\rbrace}$ 
is said to be {\it algebraically integrable} if there exists a differential operator $L$ 
on $U$ which commutes with $L_i$ and acts with distinct eigenvalues on 
the space of local solutions of \eqref{algint} for generic $\Lambda_i$. 
In this case, the system of differential equations 
$$
L_i\Psi=\Lambda_i\Psi, L\Psi=\Lambda \Psi
$$ 
(where $\Lambda$ is a certain algebraic function of the $\Lambda_i$) 
can be reduced to a first order scalar system, and thus the 
solutions of system \eqref{algint} can (in principle) be written explicitly
in quadratures.

It was proved in \cite{CV1,VSC} that the rational and trigonometric quantum 
Calogero-Moser systems are algebraically integrable for any Weyl group if the 
parameters $c_\alpha$ are integers. The same result in the elliptic case was conjectured 
in \cite{CV1}\footnote{It is interesting that this conjecture was inspired by a remarkable result
of J. Ritt, who classified in dimension one all commuting rational maps in
terms of the symmetry groups of elliptic curves (see
\cite{Ve} and references therein)} 
and proved in \cite{CEO} (for type $A$, it was proved earlier in \cite{BEG}). 
It was also proved in \cite{CEO} that algebraic integrability 
holds for the Inozemtsev system with integer parameters. Finally, algebraic integrability of 
the rational quantum Calogero-Moser systems of complex reflection groups
was established recently in \cite{BC}.  

The following theorem establishes algebraic integrability 
of the crystallographic elliptic Calogero-Moser system
attached to any complex crystallographic reflection 
group, under an integrality assumption on the parameters. Namely, for any 
reflection hypertorus $T\subset X$ and any $l=0,1,\ldots,m_T-1$ 
define the number 
$$
m_l(T)=l+\sum_{j=1}^{m_T-1}C(T,j)e^{2\pi {\rm i}jl/m_T}.
$$

\begin{theorem}\label{ali}
If for all $l$ and $T$ the numbers $m_l(T)$ are integers which are pairwise distinct  
modulo $m_T$, then the quantum integrable system $\lbrace{L_i^C\rbrace}$ 
is algebraically integrable.
\end{theorem}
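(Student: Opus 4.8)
The plan is to reduce algebraic integrability of the elliptic system to the corresponding statement for the rational Calogero--Moser system of the complex reflection group $G$, which was established in \cite{BC} under exactly an integrality hypothesis of this type. First I would observe that algebraic integrability is a \emph{local} property: by the localization/completion results of \cite{BE} used already in Proposition \ref{lem:loc}, the completion of the spherical elliptic Cherednik algebra $B_{1,c,0,\eta_C,G,X}$ at a point $\bx_0$ of a reflection hypertorus $T$ is isomorphic to the completion of $\bC[q_1,\ldots,q_{n-1},p_1,\ldots,p_{n-1}]\otimes B_{1,c}(G_T,\h_T)$, i.e. to a completed rank-one spherical Cherednik algebra tensored with a polynomial (Weyl) algebra. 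Away from all reflection hypertori the system is just $(S\h)^G$ acting on $X_{\reg}$, which is trivially algebraically integrable (the whole algebra $D(X_{\reg})^{G}$, or rather its abelian subalgebra of constant-coefficient operators, already separates the $r$-dimensional solution space). Hence the only place where a non-trivial commuting operator $L$ needs to be constructed and checked is a formal neighborhood of a generic point of each reflection hypertorus, and there the problem becomes the rank-one problem for the cyclic group $G_T\cong\bZ/m_T\bZ$ with parameter function $j\mapsto C(T,j)$.

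Next I would solve the rank-one case explicitly. For $G=\bZ/m\bZ$ acting on $\bC$, the elliptic Dunkl operator near $\lambda=0$ has the block form written in the proof of Proposition \ref{pro4}, and the Calogero--Moser operator $L^{C}$ on $G$-invariant functions is conjugate, via multiplication by an appropriate power of the coordinate and the Baker--Akhiezer normalization, to an operator with singularities $\sim m_l(T)(m_l(T)+1)$-type at the point. The numbers $m_l(T)=l+\sum_{j} C(T,j)e^{2\pi{\rm i}jl/m_T}$ are precisely the exponents of the monodromy-free (Baker--Akhiezer) local solutions: when they are integers, pairwise distinct mod $m_T$, the local solution space near $T$ is spanned by functions with trivial monodromy and prescribed pole orders, so the operator has a non-trivial \emph{local} commuting differential operator (this is the classical one-dimensional Darboux/Duistermaat--Grünbaum picture, and is exactly how algebraic integrability of the one-dimensional generalized Lamé operators \eqref{cubiccase1}, \eqref{lemn} is established). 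I would cite \cite{CEO} for the elliptic one-dimensional case and, for the structure of the rational rank-one Cherednik algebra with these parameters, \cite{BC}.

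I would then globalize. The operators $L_i^{C}$ are global sections of the sheaf $B_{1,c,0,\eta_C,G,X}$ by Theorem \ref{glosec}(i). The commuting operator $L$ produced locally at each reflection hypertorus, together with the trivial choice on $X_{\reg}$, must be glued: the point is that on overlaps the local $L$'s differ by elements of the abelian ring $\bC[L_1^{C},\ldots,L_n^{C}]$, because on $X_{\reg}$ the centralizer of the $L_i^{C}$ inside $D(X_{\reg})$ consisting of constant-coefficient operators is just $\bC[P_1,\ldots,P_n]$ acting through the $L_i^{C}$, and a partition-of-unity (or, better, a direct Čech cohomology vanishing) argument on the projective variety $X/G$ lets one correct the local choices to a single global $L$. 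Since $\bC[p_1,\ldots,p_n]$ has finite rank $r$ over $\bC[P_1,\ldots,P_n]$ and the local analysis shows $L$ acts with distinct eigenvalues on the $r$-dimensional local solution space at generic points (the eigenvalues being the distinct residues $m_l(T)$ read off transversally to each hypertorus, and generic constant eigenvalues in the bulk), this global $L$ witnesses algebraic integrability in the sense of \cite{K2,CV1,CV2,BEG}.

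The main obstacle I anticipate is the gluing step: producing a \emph{single} globally defined $L$ rather than a collection of local ones. Concretely, one must show that the sheaf of ``local commuting operators modulo $\bC[L_i^{C}]$'' has vanishing first cohomology on $X/G$, or equivalently that the monodromy-free local solutions at the various reflection hypertori assemble into a consistent global spectral curve; this is where the integrality and the pairwise-distinctness-mod-$m_T$ of the $m_l(T)$ are really used (distinctness ensures the local $L$ is unique up to the abelian ring, which is what makes the gluing unobstructed). I would handle this by following the cohomological argument in \cite{CEO}, which treats exactly this passage from local Baker--Akhiezer data on an elliptic curve to a global algebraically integrable system, and adapting it to the crystallographic torus $X$ and the orbifold $X/G$.
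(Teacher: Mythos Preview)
Your strategy is quite different from the paper's, and the difference matters because the step you yourself flag as the ``main obstacle'' --- gluing local commuting operators into a global one --- is precisely what the paper's argument avoids. The paper does \emph{not} construct $L$ by localization and patching. Instead it invokes a criterion from \cite{CEO} (their Remark~3.10): for a holonomic system with regular singularities, algebraic integrability follows once one knows that the monodromy of the joint eigenvalue problem $L_i^C\Psi=\Lambda_i\Psi$ around each reflection hypertorus is trivial. Regularity of singularities is inherited from the elliptic Dunkl eigenvalue problem \eqref{sys1} by taking the limit $\lambda\to0$; triviality of monodromy is inherited the same way, after observing that the monodromy of \eqref{sys1} factors through the orbifold Hecke algebra of \cite{EM1}, which under the integrality and non-congruence hypotheses degenerates to the group algebra of the orbifold fundamental group. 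So the global commuting operator $L$ comes for free from the existence of global single-valued (Baker--Akhiezer type) eigenfunctions, not from a \v{C}ech argument.

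Your route, as written, has two concrete gaps. First, on $X_{\reg}$ the operators $L_i^C$ are \emph{not} constant-coefficient --- they have nontrivial lower-order terms regular on $X_{\reg}$ --- so constant-coefficient operators do not commute with them, and your claim that the system is ``trivially algebraically integrable'' there is false; you cannot simply take $L$ to be a generic element of $S\h$ on the open part. Second, the gluing step is not a cohomology-vanishing problem for any sheaf you have identified: the centralizer of $\{L_i^C\}$ in $D(X_{\reg})$ is not a priori $\bC[L_1^C,\ldots,L_n^C]$, so local choices of $L$ need not differ by elements of this ring, and even if they did, the relevant ``sheaf of centralizers modulo the polynomial ring'' has no obvious reason to be acyclic. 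Your citation of \cite{CEO} for this step is misplaced: that paper establishes algebraic integrability in the real case by exactly the monodromy argument sketched above, not by patching local commuting operators. If you want to salvage your approach, the honest route is to prove trivial monodromy (which your rank-one local analysis essentially does, once interpreted correctly) and then invoke the same criterion the paper uses.
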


\begin{proof}
The proof is similar to the proof in the real reflection case, given in \cite{CEO}.

Namely, we first show that the holonomic system of differential equations 
\begin{equation}\label{sys}
L_i^C\Psi=\Lambda_i\Psi
\end{equation}
has regular singularities. This follows from the fact that 
\eqref{sys} is a limit as $\lambda\to 0$ 
of the eigenvalue problem 
for elliptic Dunkl operators 
\begin{equation}\label{sys1}
\cD_{v,C}^{\cL_\lambda,\nabla}\Psi=\Lambda(v)\Psi, 
\end{equation}
(\cite{EM1}) which obviously has 
regular singularities.
\footnote{Here it is important that we don't have moving poles. 
Otherwise (if poles are allowed to move and collide), 
a system with regular singularities can be degenerated to a system 
with irregular ones.} 

Thus, by Remark 3.10 of \cite{CEO}, it suffices to show that 
the monodromy of \eqref{sys} around the reflection hypertori is trivial. 
For the system \eqref{sys1},  
this property follows from the fact that this monodromy representation
factors through the orbifold Hecke algebra (see \cite{EM1}, Section 6.2);
indeed, since the parameters are integral and distinct modulo $m_T$, 
the orbifold Hecke algebra reduces to the group algebra of the orbifold fundamental group, 
implying the triviality of the monodromy.   
Now the required statement for \eqref{sys} follows by taking the limit 
$\lambda\to 0$.  
\end{proof}

\begin{corollary}\label{cube} The quantum integrable system defined by the operator 
(\ref{cubiccase}) is algebraically integrable if $k\in \Bbb Z$, and 
there exist integers $m_{ij}$, $i,j\in \lbrace{0,1,2\rbrace}$, pairwise non-congruent modulo $3$, 
with $m_{i0}+m_{i1}+m_{i2}=3$, such that 
$$
a_i=m_{i0}m_{i1}+m_{i0}m_{i2}+m_{i1}m_{i2}-2,
$$
and 
$$
b_i=\frac{1}{2}\prod_{j=0}^2 m_{ij}.
$$
for $i=0,1,2$. 
\end{corollary}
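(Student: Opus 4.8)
The plan is to deduce Corollary~\ref{cube} from Theorem~\ref{ali}, applied to $G=S_n\ltimes(\bZ/3\bZ)^n$ acting on $X=E_\tau^n$ with $\tau=\varepsilon$, by rewriting the integrality hypothesis on the numbers $m_l(T)$ in terms of $k$ and $a_i,b_i$. First I would enumerate the $G$-orbits of reflection hypertori. The hypertori $x_i=\varepsilon^p x_j$ ($i<j$, $p=0,1,2$) form one orbit, with cyclic stabilizer of order $m_T=2$ and the single parameter $C(T,1)=k$; for them $m_0(T)=k$ and $m_1(T)=1-k$, so the hypothesis of Theorem~\ref{ali} reduces exactly to $k\in\bZ$ (pairwise distinctness modulo $2$ is automatic, since $m_0(T)-m_1(T)=2k-1$ is odd). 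The remaining reflection hypertori are the loci $x_j=\xi$ with $\xi$ an $\varepsilon$-fixed point of $E_\tau$, i.e.\ $\xi\in\{\xi_0,\xi_1,\xi_2\}:=\{0,\eta_1,\eta_2\}$; for each $\xi_i$ these form a single orbit $T_i$ with cyclic stabilizer of order $m_T=3$ and two parameters $C(T_i,1),C(T_i,2)$. Since $\varepsilon$ fixes each $\xi_i$ while $S_n$ only permutes the factors of $E_\tau^n$, the three values $\xi_i$ lie in distinct $G$-orbits, which is why $a_0,a_1,a_2,b_0,b_1,b_2$ are six independent parameters.

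The main step is to relate, for each $i$, the triple $m_0(T_i),m_1(T_i),m_2(T_i)$ to the coefficients $a_i,b_i$ of (\ref{cubiccase}). By construction (cf.\ \cite{EM1} and the proof of Theorem~\ref{ali}) these three numbers are the local exponents of $L_1^C$ transverse to $T_i$. On the other hand, at a generic point of $T_i$ the only coefficients of $L_1^C$ singular along $T_i$ are those carrying $a_i$ and $b_i$: writing $z=x_j-\xi_i$ and using $\wp(z)=z^{-2}+O(z^2)$, $\wp'(z)=-2z^{-3}+O(z)$, the operator $\partial_j^3+a_i\wp(z)\,\partial_j+b_i\wp'(z)$ has indicial polynomial $\rho(\rho-1)(\rho-2)+a_i\rho-2b_i=\rho^3-3\rho^2+(2+a_i)\rho-2b_i$ at $z=0$. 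Hence $m_{i0},m_{i1},m_{i2}:=m_0(T_i),m_1(T_i),m_2(T_i)$ are the roots of this cubic, so $m_{i0}+m_{i1}+m_{i2}=3$, $a_i=m_{i0}m_{i1}+m_{i0}m_{i2}+m_{i1}m_{i2}-2$, and $b_i=\frac12 m_{i0}m_{i1}m_{i2}$.

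Putting the two cases together, the hypothesis of Theorem~\ref{ali} holds exactly when $k\in\bZ$ and, for each $i$, the roots of $\rho^3-3\rho^2+(2+a_i)\rho-2b_i$ are integers pairwise non-congruent modulo $3$ --- which is precisely the existence of integers $m_{ij}$ ($i,j\in\{0,1,2\}$), pairwise non-congruent modulo $3$, with $m_{i0}+m_{i1}+m_{i2}=3$ and with $a_i,b_i$ given by the displayed symmetric-function formulas. Theorem~\ref{ali} then gives the asserted algebraic integrability. I expect the only delicate point to be the identification of $\{m_0(T_i),m_1(T_i),m_2(T_i)\}$ with the transverse exponents of $L_1^C$ along $T_i$ (equivalently, with the roots of the above indicial cubic); granting that, the argument is just the orbit bookkeeping above together with the elementary indicial computation.
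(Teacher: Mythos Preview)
Your proposal is correct and follows essentially the same approach as the paper: both identify the $m_{ij}$ with the roots of the indicial polynomial $\rho^3-3\rho^2+(2+a_i)\rho-2b_i$ of the transverse rank-one operator, and both use this to match the hypotheses of Theorem~\ref{ali}. Your write-up is in fact more complete than the paper's --- you explicitly treat the order-$2$ hypertori $x_i=\varepsilon^p x_j$ and verify that $k\in\bZ$ is exactly the condition there, which the paper leaves implicit. The only presentational difference is that the paper, rather than appealing back to the Hecke-algebra monodromy argument inside Theorem~\ref{ali}, remarks directly that the local $\varepsilon$-symmetry $x\mapsto\varepsilon x$ together with the non-congruence of the indices modulo $3$ forces the absence of logarithmic terms; this is a more hands-on explanation of the same phenomenon, not a different proof.
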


\begin{proof} The last condition means that the indices of the corresponding 1-variable 
operator $\partial^3+\frac{a_i}{x^2}\partial-\frac{2b_i}{x^3}$ 
(which we obtain by looking at the neighborhood of a generic point of the hypertori 
$x_j=0$, $x_j=\eta_1$, $x_j=\eta_2$ in $E_\tau^n$) are $m_{i0},m_{i1},m_{i2}$,
and thus are all integers. Now the absence of logarithmic terms (and hence, 
the triviality of monodromy) follows from the symmetry $x \rightarrow \varepsilon x, \varepsilon^3 =1$ 
and the fact that the indices $m_{i0},m_{i1},m_{i2}$ have different residues modulo $3$. 
\end{proof}

\begin{remark} The integrality and non-congruence assumptions in Theorem \ref{ali} (and in particular in 
Corollary \ref{cube}) are necessary. Here is a sketch of a proof. Suppose the system is algebraically integrable.
Let us translate the origin in $X$ to a generic point $\bold x$ of some reflection hypertorus 
$T\subset X$ with $m_T=m$, and then go to the rational limit by multiplying the lattice $\Gamma$ by a factor $K$ going to infinity.
Then we will get that the single-variable rational Calogero-Moser operator $L$ of order $m$ with the appropriate parameters is
algebraically integrable (this is seen by looking at what happens to the Dunkl-Opdam operators in the limit).
But in the single-variable rational case, it is known (and easy to prove) that the integrality and non-congruence conditions 
are necessary (see e.g. \cite{BC}). Namely, in this case the operator $L$ is homogeneous of degree $-m$, 
and in the algebraically integrable case it should have eigenfunctions (with eigenvalue $\mu^m$) of the form
$F(\mu x)$, where $F(x)=e^xQ(1/x)$, $Q$ being a polynomial, and it is easy to compute when there are such
solutions using the power series method. Since this argument can be applied to all reflection hypertori, 
it gives the integrality and non-congruence conditions for all the parameters. 
\end{remark}

\begin{example} 
Consider the case $n=1$, and $a_0=a_1=a_2=a$, $b_0=b_1=b_2=b$. 
Then Corollary \ref{cube} implies that the operator
$$
L=D^3+a\wp(z)D+b\wp'(z).
$$
(where $\wp(z)=\wp(z,\tau)$, $\tau=e^{2\pi {\rm i}/3}$) 
is algebraically integrable if 
there exists an triple of integers ${\bold m}=\lbrace m_0,m_1,m_2\rbrace$ ($m_0<m_1<m_2$), pairwise non-congruent modulo $3$, 
with $m_{0}+m_{1}+m_{2}=3$, such that 
$$
a=m_{0}m_{1}+m_{0}m_{2}+m_{1}m_{2}-2,
$$
and 
$$
b=\frac{1}{2}m_{0}m_1m_2.
$$

\begin{remark} In the case $m_2-m_1=m_1-m_0=n\in \Bbb N$,
the operator $L$ has the form
$$
L=D^3+(1-n^2)\wp(z)D+\frac{1-n^2}{2}\wp'(z).
$$
A proof of algebraic integrability of this operator
(i.e., of the existence of meromorphic eigenfunctions) 
in the equianharmonic case 
is given by Halphen, \cite{Ha}, p.571; this proof easily 
extends to general values of $m_i$.
\end{remark}
 
\begin{remark}
Note that if ${\bold m}=\lbrace 0,1,2\rbrace$, 
then $L=D^3$, so the algebraic integrability of $L$ is 
obvious, and if ${\bold m}=\lbrace -1,1,3\rbrace$, the algebraic 
integrability of $L$  follows from the fact that $L$ commutes 
with the Lam\'e operator $D^2-2\wp(z)$. The case ${\bold m}=(-1,0,4)$ 
is a special case of the algebraically integrable operator
$$
L=D^3-(6\wp(z)+c)D, c\in \Bbb C
$$
considered by Picard in 1881 (\cite{Pi}; see also \cite{For}, page 464, Ex. 13).

Observe that these examples are algebraically integrable for any elliptic curve. 
On the other hand, as explained in \cite{U}, 
if $\bold m=(-3,1,5)$, then the operator $L$ is algebraically 
integrable only in the equianharmonic case. 
\footnote{The general theory of algebraic integrability of operators 
of the form $D^3+(a\wp+c)D +(b\wp'+e)$ for $b=\frac{1}{2}a$ 
is discussed in \cite{U}, 
and will be discussed in a forthcoming paper 
by the first author with E. Rains in the general case.} 
\end{remark}
\end{example}

\begin{example}
Similarly, the operator (\ref{lemn}) is algebraically integrable 
when the parameter $k$ is integer and for $i=0,1$ there exist integers $m_{1i}, m_{2i}, m_{3i}, m_{4i}$ with 
$m_{1i}+m_{2i}+m_{3i}+m_{4i}=6$, which are distinct modulo 4, such that
$$
a_i=\sum_{1\le k<l\le 4} m_{ki}m_{li}-11,\quad b_i=\frac{1}{2}(\sum_{1\le k<l<j\le 4} m_{ki}m_{li}m_{ji}-6-a_i)
$$
and 
$$
c_i=m_{1i} m_{2i}m_{3i}m_{4i}.
$$

\begin{remark}
When $a_0=a_1=b_0=b_1=c_0=c_1=0$, the operator (\ref{lemn}) specializes to  
$$
L=D^4 -2k(k+1)\wp(z)D^2-2k(k+1)\wp'(z)D+k(k+1)(k+3)(k-2)\wp^2(z),
$$
(after the change of variable $z=(1+i)w$ and multiplication by $-4$), 
which is the square of the Lam\'e operator $D^2-k(k+1)\wp$
plus a constant. 
\end{remark}


\end{example}

\end{document}